\documentclass[11pt,a4paper]{article}
\usepackage[T1]{fontenc}
\usepackage{lmodern}
\usepackage[margin=27mm]{geometry}
\usepackage{amsmath,amssymb,amsthm,mathtools}
\usepackage{microtype}
\usepackage[hidelinks]{hyperref}
\usepackage{enumitem}
\setlist[enumerate]{itemsep=3pt,topsep=5pt}
\allowdisplaybreaks[2]
\newtheorem{theorem}{Theorem}[section]
\newtheorem{lemma}[theorem]{Lemma}
\newtheorem{proposition}[theorem]{Proposition}

\theoremstyle{remark}
\newtheorem{remark}[theorem]{Remark}
\newcommand{\R}{\mathbb R}
\newcommand{\E}{\mathbb E}
\newcommand{\Pp}{\mathbb P}
\newcommand{\F}{\mathcal F}
\newcommand{\G}{\mathcal G}
\newcommand{\ind}{\mathbf 1}
\newcommand{\st}{\xrightarrow{\mathcal L\text{-}s}}

\newcommand{\norm}[1]{\lVert #1\rVert}

\DeclareMathOperator{\Cov}{Cov}

\title{A fixed-time stable limit theorem for non-normalized\\
functionals on irregular time grids}
\author{Yi Guo}
\date{}

\begin{document}
\maketitle
\vspace{-1.5em}
\begin{abstract}
Let an It\^o semimartingale be observed at times satisfying
$\tau_{i+1}^n-\tau_i^n=(n\theta_{\tau_i^n})^{-1}$, where $\theta$ is
adapted and c\`adl\`ag, and $\theta$ and its reciprocal are bounded on
compact intervals. Under the usual hypothesis (H) on the semimartingale,
we prove fixed-time stable convergence for test functions whose Hessian
is $o(\norm{x})$ at zero. The limit is a sum of jump contributions with
sampling scale $\theta_{T-}^{-1}$ and separate pre-jump and post-jump
volatilities. The proof establishes joint stable convergence of the local
Poisson positions and Brownian increments on the endogenous grid, and
controls infinitely many small jumps by a uniform It\^o estimate.
Three examples identify the left-limit sampling scale and show why
positivity and c\`adl\`ag regularity alone cannot replace local control
of the reciprocal sampling intensity.
\end{abstract}

\section{Introduction}\label{sec:introduction}
Let $X$ be an It\^o semimartingale on a filtered probability space.
We study non-normalized functionals of its increments, which on an
equidistant grid take the form
\[
 \sum_{i=1}^{\lfloor t/\Delta_n\rfloor}
 f\bigl(X_{i\Delta_n}-X_{(i-1)\Delta_n}\bigr),
 \qquad \Delta_n=\frac1n.
\]
Laws of large numbers and central limit theorems for such functionals,
under suitable assumptions on $X$ and $f$, are developed in
Jacod \cite{jacodrpv} and Jacod and Protter \cite{JP}.
Equidistant observations will be referred to as regular sampling.
The asymptotic theory of high-frequency statistics and discretization
errors under regular sampling is well developed; see, for example,
\cite{ait14,al,barn11,jacodrpv,JP}.

In many applications, observations are not available at equally spaced
times. A regular grid may therefore fail to reflect the observation
mechanism. Irregular sampling also allows observations to be taken more
frequently during selected periods or when the process enters particular
regions of the state space. These considerations motivate the study of
irregular sampling and its effect on discretization errors.

The effect of random sampling intervals on high-frequency asymptotics was studied by A\"it-Sahalia and Mykland \cite{aitmyk03,aitmyk04}. 
Here we consider the recursive sampling scheme studied by Lindberg and
Rootz\'en \cite{lindberg2013error}:
\[
 \tau_0^n=0,\qquad
 \tau_{i+1}^n=\tau_i^n+\frac1{n\theta_{\tau_i^n}}.
\]
The positive adapted process $\theta$ determines the local sampling
frequency: larger values produce shorter observation intervals.
The length of each interval is fixed using the information available
at its left endpoint, so the grid may depend on the observed process.
Lindberg and Rootz\'en establish limit theorems for random-grid
approximations of stochastic integrals, with the sampling intensity
entering the limiting distribution. Related results concern stochastic
integral approximations under irregular sampling in Zhou and Su
\cite{szg}, and irregular discretization in multilevel Monte Carlo
methods in Guo et al.\ \cite{guoyi}.
More general frameworks for irregular sampling are studied by Hayashi,
Jacod and Yoshida \cite{hayaclt} and by Jacod and Protter
\cite[Chapter 14]{JP}.

Another important class of sampling schemes uses hitting times; see
Fukasawa \cite{fuka desi,fuka cltrv,fuka rvss} and Fukasawa and Rosenbaum
\cite{fukarose}. In a hitting-time scheme, the next observation time is
determined by the subsequent evolution of the process. This differs
from the recursion above, for which the next observation time is already
known at the current one. The proof below uses this latter property.

Under hypothesis (H), we assume that $\theta$ has c\`adl\`ag paths and
that both $\theta$ and $1/\theta$ are bounded on compact time intervals.
For $C^2$ test functions satisfying $f(0)=Df(0)=0$ and
\[
 \norm{D^2f(x)}=o(\norm{x})\qquad(x\to0),
\]
we establish stable convergence of the centered and scaled
non-normalized functionals at each fixed deterministic time, jointly
at any finite collection of such times. The contribution of a jump at
$T$ involves the sampling scale $\theta_{T-}^{-1}$ and the two volatility
values $\sigma_{T-}$ and $\sigma_T$. Common jumps of $X$, $\sigma$ and
$\theta$ are allowed. When $\theta\equiv1$, the result recovers the
fixed-time conclusion of \cite[Theorem 5.1.2]{JP}. Our theorem thus
extends that conclusion to the recursive adaptive grid considered here.
It does not assert functional convergence with actual-time jump centering.

Three examples clarify the roles of the sampling assumptions. The
first uses a bounded intensity with a common jump to show that replacing
$\theta_{T-}$ by $\theta_T$ changes the limiting variance. The second
uses a positive adapted intensity that is discontinuous at the origin
to show that a vanishing mesh alone need not make the continuous
contribution negligible at the central limit scale. The third retains
strict positivity and c\`adl\`ag paths, but allows a left limit of
$\theta$ to vanish. It shows that the local bound on $1/\theta$ cannot
simply be omitted while retaining the other assumptions.

The proof follows the localization, jump truncation and Taylor-expansion
framework of Jacod and Protter \cite[Sections 4.4 and 5.1]{JP}.
The adaptation to the present grid requires joint stable convergence
of the local Poisson positions and Brownian increments. We establish
this convergence by conditioning at the left endpoint of each sampling
interval and testing against bounded variables measurable with respect
to the original sigma-field. An adaptive-grid version of the local
increment expansion then identifies the contribution of each isolated
jump. Finally, a uniform It\^o estimate controls the small-function
remainder and permits removal of the cutoff near zero without a
finite-activity assumption.

The paper is organized as follows. Section~\ref{sec:main} introduces
the notation and states the main theorem and its conditional limit law.
Section~\ref{sec:sampling} presents the three examples.
Section~\ref{sec:marks} establishes joint stable convergence of the local
marks after localization. Section~\ref{sec:lemmas} gives the auxiliary
estimates, and Section~\ref{sec:proof} completes the proof of the theorem.

\section{Notation and main result}\label{sec:main}

\subsection{Model and assumptions}

Let $(\Omega,\F,(\F_t)_{t\ge0},\Pp)$ satisfy the usual conditions.
On this space let $W$ be a $d'$-dimensional Brownian motion and let $p$
be a Poisson random measure on $\R_+\times E$ with compensator
$q(ds,dz)=ds\,\lambda(dz)$. These are Brownian and Poisson driving
processes with respect to the same filtration, as in the usual
Grigelionis representation. In particular, for a deterministic set
$A$ with $\lambda(A)<\infty$, $(W,p|_{\R_+\times A})$ has the standard
joint independent-increment law relative to that filtration. This also
holds after a stopping time. Equivalently, this property follows by
applying the exponential-martingale characterization to the Brownian
coordinates and counts on disjoint finite-intensity mark sets.

Consider the $\R^d$-valued process
\begin{equation}\label{eq:X}
 X_t=X_0+\int_0^t b_s\,ds+\int_0^t\sigma_s\,dW_s
 +(\delta\ind_{\{\norm{\delta}\le1\}})\star(p-q)_t
 +(\delta\ind_{\{\norm{\delta}>1\}})\star p_t.
\end{equation}
The coefficients have the usual measurability required in this
representation; in particular $\delta$ is predictable in $(\omega,s)$.
We impose the following assumptions.
\begin{enumerate}[label=\textnormal{(\roman*)}]
 \item \textbf{Semimartingale hypothesis (H).}
 The drift $b$ is locally bounded, and $\sigma$ is adapted and
 c\`adl\`ag. There are stopping times $\rho_m\uparrow\infty$ and
 deterministic functions $\Gamma_m:E\to\R_+$ such that
 \[
  \norm{\delta(\omega,s,z)}\wedge1\le\Gamma_m(z)
  \quad(s\le\rho_m(\omega)),\qquad
  \int_E\Gamma_m(z)^2\lambda(dz)<\infty.
 \]
 Here $\sigma$ denotes the c\`adl\`ag version of the Brownian
 coefficient, so $\sigma_T$ is its post-jump value.
 \item \textbf{Sampling intensity.}
 The process $\theta$ is adapted and c\`adl\`ag and, almost surely,
 for every $H<\infty$,
 \begin{equation}\label{eq:theta}
  0<\inf_{0\le s\le H}\theta_s
  \le\sup_{0\le s\le H}\theta_s<\infty.
 \end{equation}
 \item \textbf{Test function.}
 The function $f:\R^d\to\R$ is $C^2$ and satisfies
 \begin{equation}\label{eq:f}
  f(0)=0,\qquad Df(0)=0,\qquad
  \norm{D^2f(x)}=o(\norm{x})\quad(x\to0).
 \end{equation}
\end{enumerate}
Hypothesis (H) is the standard one used for the regular-grid theorem
in Jacod--Protter \cite[Sections 4.4 and 5.1]{JP}.
Neither $\theta$ nor $\sigma$ is required to be a semimartingale.

\subsection{Statistic and stable limit}

Define
\begin{equation}\label{eq:grid}
 \tau_0^n=0,\qquad
 \tau_{i+1}^n=\tau_i^n+\frac1{n\theta_{\tau_i^n}},\qquad
 N_n(t)=\max\{i:\tau_i^n\le t\},\qquad
 \eta_n(t)=\tau_{N_n(t)}^n.
\end{equation}
Put
\begin{equation}\label{eq:V}
 V_t^n(f)=\sum_{i=1}^{N_n(t)}f(X_{\tau_i^n}-X_{\tau_{i-1}^n}),
 \qquad F_t(f)=\sum_{0<s\le t}f(\Delta X_s).
\end{equation}
Let $(T_p)_{p\ge1}$ exhaust the nonzero jumps of $X$, without repetition.
On a product extension take mutually independent variables
\[
 U_p\sim\mathrm{Unif}(0,1),\qquad
 \Psi_p^-,\Psi_p^+\sim N(0,I_{d'}),\qquad p\ge1,
\]
independent of $\F$. Set
\begin{align}\label{eq:R}
 R_p^{\mathrm{corr}}
 &=\frac1{\sqrt{\theta_{T_p-}}}
   \left(\sqrt{U_p}\,\sigma_{T_p-}\Psi_p^-
       +\sqrt{1-U_p}\,\sigma_{T_p}\Psi_p^+\right),\\
 Z_t(f)&=\sum_{p:T_p\le t}
       Df(\Delta X_{T_p})R_p^{\mathrm{corr}}.\label{eq:Z}
\end{align}
The derivative $Df$ is a row vector. Throughout the paper, stable
convergence is understood relative to the full original sigma-field
$\F$. More precisely, let $Y_n$ be random elements of a Polish space
$\mathsf S$, defined on $(\Omega,\F,\Pp)$, and let $Y$ be an
$\mathsf S$-valued random element on an extension
$(\widetilde\Omega,\widetilde\F,\widetilde\Pp)$.
We write $Y_n\st Y$ if
\[
 \E[B\varphi(Y_n)]\longrightarrow
 \widetilde\E[B\varphi(Y)]
\]
for every bounded $\F$-measurable random variable $B$ and every bounded
continuous function $\varphi:\mathsf S\to\R$, where $B$ is identified
with its lift to the extension. See Jacod and Shiryaev \cite{Jeanshi}
for further background.

\begin{theorem}\label{thm:main}
Under assumptions \textnormal{(i)--(iii)}, the series in
\eqref{eq:V} defining $F_t(f)$ and in \eqref{eq:Z} are absolutely
convergent almost surely for every finite $t$. For each deterministic
$t\ge0$,
\begin{equation}\label{eq:main}
 \sqrt n\bigl(V_t^n(f)-F_t(f)\bigr)\st Z_t(f).
\end{equation}
The convergence also holds jointly at any finite collection of
deterministic times. No independence between $\theta$ and $X$ is required,
and common jumps of $X$, $\sigma$, and $\theta$ are allowed.
\end{theorem}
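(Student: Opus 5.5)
We follow the localization, truncation and Taylor-expansion scheme of Jacod and Protter for Theorem 5.1.2, adapting each step to the endogenous grid.

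\textbf{Absolute convergence and localization.} Absolute convergence of $F_t(f)$ and $Z_t(f)$ comes from $|f(x)|\le K\norm{x}^2$ and $\norm{Df(x)}\le K\norm{x}$ near $0$, together with $\sum_{s\le t}\norm{\Delta X_s}^2<\infty$. For $Z_t(f)$ we argue conditionally on $\F$, using boundedness of $\sigma$ and of $1/\theta$ on $[0,t]$.

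By a standard localization we may assume that $b,\sigma,\theta,1/\theta$ are bounded by a constant and that $\norm{\delta}\le\Gamma(z)$ with $\Gamma$ bounded and $\int\Gamma^2\,d\lambda<\infty$. Stopping at $\rho_m$ and at the exit times of $\theta$ and $1/\theta$ from $[1/m,m]$ preserves stable convergence because these events exhaust $\Omega$. The mesh is then at most $K/n$ and at least $1/(Kn)$, and $N_n(t)\le Kn$.

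\textbf{Jump truncation.} Fix $\varepsilon>0$. Split $X=X(\varepsilon)+X'(\varepsilon)$, where $X'(\varepsilon)=(\delta\ind_{\{\Gamma>\varepsilon\}})\star p$ has finitely many jumps on $[0,t]$, at times $S_1<S_2<\dots$. The marks come from the finite-intensity set $A_\varepsilon=\{\Gamma>\varepsilon\}$.

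Let $i_n(p)$ be the index with $S_p\in(\tau^n_{i-1},\tau^n_i]$. Define the local position
\[
\kappa^n_p=\frac{S_p-\tau^n_{i-1}}{\tau^n_i-\tau^n_{i-1}}.
\]
Define the normalized Brownian pieces
\[
\sqrt n\,(W_{S_p}-W_{\tau^n_{i-1}})\quad\text{and}\quad \sqrt n\,(W_{\tau^n_i}-W_{S_p}).
\]

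The key intermediate result is the \emph{joint stable convergence} of these quantities to $\bigl(U_p,\ \sqrt{U_p/\theta_{S_p-}}\,\Psi^-_p,\ \sqrt{(1-U_p)/\theta_{S_p-}}\,\Psi^+_p\bigr)$. The scale is $\theta_{S_p-}^{-1}$ because $\tau^n_{i-1}\uparrow S_p$, so the interval length is $1/(n\theta_{\tau^n_{i-1}})\to 1/(n\theta_{S_p-})$.

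The proof of this step uses the fact that $\tau^n_i$ is $\F_{\tau^n_{i-1}}$-measurable. Conditionally on $\F_{\tau^n_{i-1}}$, the pair (Brownian motion, restricted Poisson measure) on the known interval $(\tau^n_{i-1},\tau^n_i]$ has its independent-increment law. Given one $A_\varepsilon$-jump in that interval, its position is uniform, and the Brownian pieces before and after it are independent Gaussians.

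To get stability against an arbitrary bounded $\F$-measurable $B$, we first reduce to $B$ measurable with respect to $\F_s\vee\sigma(\text{increments after }s+h)$. We then use that the interval containing the first jump after $s$ shrinks. This gives asymptotic independence, since the local interval has vanishing length and the relevant increments on it are asymptotically independent of $\F$.

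We expect this step to be the main obstacle: handling the random, endogenous index $i_n(p)$ and several jumps simultaneously. We would treat the jumps successively, using stopping times $S_p$ together with a strong-Markov-type restart of $(W,p)$ at $\tau^n_{i-1}$.

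\textbf{Contribution of each big jump.} On the interval containing $S_p$, write the increment as $\Delta X_{S_p}+\sigma_{S_p-}(W_{S_p}-W_{\tau_{i-1}})+\sigma_{S_p}(W_{\tau_i}-W_{S_p})+o_P(n^{-1/2})$. This uses the càdlàg property of $\sigma$ and the boundedness of $b$. The small jumps on that interval are $o_P(n^{-1/2})$ after letting $\varepsilon\to0$ later.

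A first-order Taylor expansion of $f$ around $\Delta X_{S_p}$ then gives
\[
\sqrt n\bigl(f(\Delta^n_iX)-f(\Delta X_{S_p})\bigr)\to Df(\Delta X_{S_p})R^{\mathrm{corr}}_p
\]
stably, jointly in $p$ and at finitely many times $t$. Fixed deterministic $t$ is not a jump time almost surely, which matters for the boundary interval containing $t$.

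\textbf{Remainder terms.} Two remainders remain.
\begin{enumerate}[label=(\alph*)]
\item The sum over intervals without a big jump, $\sqrt n\sum f(\Delta^n_iX(\varepsilon))-\sqrt n\sum_{s}f(\Delta X(\varepsilon)_s)$.
\item Cross terms of order $\sqrt n\sum\norm{Df(\Delta X_{S_p})}\cdot\norm{\text{small jumps on that interval}}$.
\end{enumerate}
Both must satisfy $\lim_{\varepsilon\to0}\limsup_n\E[\cdots\wedge1]=0$.

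For (a), apply Itô's formula to $f(X(\varepsilon)_s-X(\varepsilon)_{\tau_{i-1}})$ on each interval, which is legitimate because the interval endpoints are stopping times. Subtracting $\sum f(\Delta X(\varepsilon))$ leaves three pieces:
\begin{itemize}
\item a drift term bounded by $\int\norm{Df(Y)}\le K\norm{Y}$, hence $O(n^{-1/2})$ per unit time after the $\sqrt n$ factor, but vanishing thanks to $\norm{D^2f}=o(\norm x)$;
\item a $D^2f\,\sigma\sigma^\top$ term of size $o(\norm Y)\cdot n^{-1}$ per interval, hence $o(1)$ after the $\sqrt n$ factor;
\item a jump-compensator term $f(Y+\delta)-f(Y)-f(\delta)$, bounded by $K\norm Y\norm\delta^2$ plus terms with $\int_{\Gamma\le\varepsilon}\Gamma^2\,d\lambda\to0$.
\end{itemize}
This is the uniform Itô estimate, whose constants must be uniform in the grid; they are, because the mesh lies in $[1/(Kn),K/n]$.

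Finally, combine the steps by the standard approximation lemma for stable convergence: letting $n\to\infty$ and then $\varepsilon\to0$ gives \eqref{eq:main}, jointly at finitely many deterministic times.
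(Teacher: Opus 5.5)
Your outline follows the paper's architecture: two-sided localization of $\theta$, conditioning at left endpoints where $\tau_i^n$ is already known, marks with scale $\theta_{S-}^{-1}$, the $\sigma_{S-}/\sigma_S$ local expansion, Taylor, and an It\^o remainder bound. Truncating jumps at $\Gamma>\varepsilon$ instead of splitting $f=f_\varepsilon+g_\varepsilon$ is a workable variant.

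The gap is the step you flag yourself: stability with respect to the full $\F$. Because $\theta$ is adapted to an arbitrary filtration, the marks are not functionals of $(W,p)$. So the regular-grid device of replacing $B$ by $\E[B\mid\sigma(W,p)]$ is unavailable, and your substitute reduction does not work.

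Approximating $B$ by $\F_s$-measurable variables is a legitimate density step, but it leaves the atoms before $s$ untreated. Suppose instead you localize the atom, $S\in(kh,(k+1)h]$, and replace $B$ on that event by some $B_k$ measurable with respect to $\F_{kh}\vee\sigma(\text{increments of }(W,p)\text{ after }(k+1)h)$. The error $\sum_k\E[|B-B_k|\ind_{\{S\in(kh,(k+1)h]\}}]$ then need not vanish as $h\to0$. For example, let $\F_t$ also contain $\xi\ind_{\{S\le t\}}$ for a fair sign $\xi$ independent of $(W,p)$; this leaves $W$ Brownian and $p$ Poisson. With $B=\xi$, the error is at least $\Pp(S\le H-h)$.

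In martingale terms, $M_s=\E[B\mid\F_s]$ may jump at the totally inaccessible time $S$, with $\E[\Delta M_S\mid\F_{S-}]\neq0$ in general. Meanwhile $U_n(S)$ and $B_n^-(S)$ are $\F_{S-}$-measurable. So you must show that this jump does not correlate with the marks; it cannot be assumed away. Saying that the local increments are asymptotically independent of $\F$ restates the conclusion rather than proving it.

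The missing idea is Step 2 of Lemma~\ref{lem:marks}, which runs as follows.
The centered identity $\E[\zeta_i^n\mid\F_{a_i}]=0$ converts $\E[B\sum_iH_i^n\zeta_i^n]$ into $\E\sum_iH_i^n\zeta_i^n(M_{b_i}-M_{a_i})$.
At an atom the increment tends to $\Delta M_S$, and conditioning on $\F_S$ replaces the mark by the predictable function $q_n(S)$.
The Radon--Nikodym identity $\E\sum_{S\le H}v_S\Delta M_S=\E\int_0^Hv_sa_s\,ds$ turns the jump term into a $ds$-integral.
Approximating $a$ by elementary predictable processes, together with $\E[\int_{a_i}^{b_i}q_n(s)\,ds\mid\F_{a_i}]=0$, makes that integral vanish.
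Several atoms are then handled by a telescoping product with $\F_{a_i}$-measurable weights $H_i^n$; this is the rigorous version of your successive restart.

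Some smaller points also need repair.
The bound $\norm{Df(x)}\le K\norm{x}$ does not give the claimed absolute convergence of $Z_t(f)$; use $Df(x)=o(\norm{x}^2)$ instead.
The time $\ell_n(S)$ is not an $\F$-stopping time, so the isometry bounds on the interval around $S$ need the initial enlargement by $p|_{A_\varepsilon}$, as in Lemma~\ref{lem:localX}.
In the It\^o remainder you omit the two martingale terms.
The integrand with bound $\norm{Y}\norm{\delta}^2$ is $f(Y+\delta)-f(Y)-f(\delta)-Df(Y)\delta$; without the $Df(Y)\delta$ term there is an $\norm{Y}^2\norm{\delta}$ term that need not be $\lambda$-integrable.
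Finally, ``intervals without a big jump'' is not an $\F_{a_i}$-measurable selection, so the estimate should be applied to $X(\varepsilon)$ on all completed intervals.
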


\begin{remark}
The statement concerns random variables at fixed deterministic times.
It does not assert convergence in the Skorokhod space with the centering
$F_t(f)$. The distinction between actual-time and discretized centering
already occurs on a regular grid \cite[Remark 5.1.3]{JP}.
\end{remark}

\begin{remark}
When $\theta\equiv1$, both the statistic and its limit reduce to those
in the fixed-time statement of \cite[Theorem 5.1.2]{JP}.
If $X$ is continuous, the jump sum is empty and the limit is zero.
\end{remark}

\subsection{Conditional law of the limit}

Let $c_s=\sigma_s\sigma_s^\top$ and write
$v_p=Df(\Delta X_{T_p})$. The conditional law of the theorem is fully
specified by \eqref{eq:R}--\eqref{eq:Z}. In particular,
\begin{align}\label{eq:variance}
 \widetilde\E[Z_t(f)\mid\F]&=0,\notag\\
 \widetilde\E[Z_t(f)^2\mid\F]
 &=\sum_{p:T_p\le t}
       \frac{v_p(c_{T_p-}+c_{T_p})v_p^\top}{2\theta_{T_p-}}.
\end{align}
Indeed, conditional on $\F$ and all the $U_p$, the two Brownian marks
at a jump are independent and its conditional covariance is
\[
 \Cov(R_p^{\mathrm{corr}}\mid\F,U_p)
       =\frac{U_p c_{T_p-}+(1-U_p)c_{T_p}}{\theta_{T_p-}}.
\]
Integration over $U_p$, followed by summation over independent marks,
gives \eqref{eq:variance}. If $c_{T_p-}=c_{T_p}$ at every nonzero jump,
the conditional law given $\F$ is Gaussian. Otherwise it is generally
a Gaussian mixture; its variance alone does not specify that law.

\section{The role of the sampling assumptions}\label{sec:sampling}

The three examples address distinct points. Proposition~\ref{prop:left-scale}
identifies the sampling value that enters the limit while satisfying
all the theorem's assumptions. Proposition~\ref{prop:unbounded-reciprocal}
shows that positivity, adaptedness, and a vanishing mesh are insufficient.
Proposition~\ref{prop:cadlag-reciprocal} retains c\`adl\`ag paths and
violates only the local bound on $1/\theta$, demonstrating that this
bound cannot be omitted from the stated theorem without a replacement
condition.

\begin{samepage}
\subsection{The sampling intensity at a jump}

\begin{proposition}[Left-limit sampling scale]\label{prop:left-scale}
Let $N$ be a rate-one Poisson process, let $T$ be its first jump time,
and let $W$ be a Brownian motion independent of $N$. On their usual
joint filtration, set
\[
 J_s=\ind_{\{T\le s\}},\qquad X_s=W_s+J_s,\qquad
 \theta_s=1+J_s,\qquad f(x)=x^4.
\]
Then $X$, $\theta$, and $f$ satisfy assumptions \textnormal{(i)--(iii)}.
For every deterministic $t>0$,
\begin{equation}\label{eq:left-example}
 \sqrt n\bigl(V_t^n(f)-J_t\bigr)
 \xrightarrow{\mathcal L}4Z\ind_{\{T\le t\}},
\end{equation}
where $Z\sim N(0,1)$ is independent of $T$. Replacing
$\theta_{T-}$ by $\theta_T$ in \eqref{eq:R} gives a different law.
\end{proposition}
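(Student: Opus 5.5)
Our plan is to verify the hypotheses directly and then read off the limit from Theorem~\ref{thm:main}. The checks are routine.

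For (i): $X$ has $b=0$ and $\sigma\equiv1$, which is continuous, so $\sigma_{T-}=\sigma_T=1$. Its only jump is of size one at $T$. In the representation \eqref{eq:X}, take $E=\R$, let $p$ be the Poisson measure of $N$ with $\lambda=\delta_{1}$ (Dirac at $z=1$), and set $\delta(\omega,s,z)=z\ind_{\{s\le T(\omega)\}}$. This $\delta$ is predictable and gives exactly one jump. Then $\Gamma_m(z)=1$ works, since $\int\Gamma_m^2\,d\lambda=1<\infty$. Note that $\norm{\delta}=1$ sits at the cutoff $\norm{\delta}\le1$, so it lands in the compensated part; the compensator then contributes a drift $-\int_0^t\ind_{\{s\le T\}}ds$. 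To avoid this, we use $\lambda=\delta_{2}$ and $\delta=(z/2)\ind_{\{s\le T\}}$ instead.

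For (ii): $\theta$ takes values in $\{1,2\}$, is adapted and càdlàg, and satisfies \eqref{eq:theta}.

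For (iii): $f(x)=x^4$ gives $f(0)=f'(0)=0$ and $f''(x)=12x^2=o(|x|)$.

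With these in place, Theorem~\ref{thm:main} gives, at fixed $t$,
\[
\sqrt n\bigl(V_t^n(f)-F_t(f)\bigr)\st Z_t(f).
\]
Here $F_t(f)=\sum_{s\le t}(\Delta X_s)^4=J_t$. On $\{T\le t\}$ we have $Df(\Delta X_T)=4$ and $\theta_{T-}=1$, so
\[
R^{\mathrm{corr}}=\sqrt U\Psi^-+\sqrt{1-U}\Psi^+ .
\]
Conditionally on $U$ this is $N(0,1)$, independently of $U$. Hence $R^{\mathrm{corr}}=Z\sim N(0,1)$, independent of $U$ and of $\F$, and in particular of $T$. So $Z_t(f)=4Z\ind_{\{T\le t\}}$. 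Stable convergence implies convergence in law, which proves \eqref{eq:left-example}. Because the limit $Z$ is independent of $\F$, we also get joint convergence with $T$.

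For the last claim, replace $\theta_{T-}=1$ by $\theta_T=2$. The limit becomes
\[
4Z\ind_{\{T\le t\}}/\sqrt2=2\sqrt2\,Z\ind_{\{T\le t\}}.
\]
Both candidate laws are mixtures: an atom at $0$ with mass $e^{-t}$, plus a centered Gaussian with mass $1-e^{-t}>0$. The Gaussian parts have variances $16$ and $8$, so the second moments are $16(1-e^{-t})$ and $8(1-e^{-t})$, which differ. Alternatively, the characteristic functions differ. Since limits in law are unique, the $\theta_T$ version is not the limit.

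The only delicate point is fitting the jump into the Grigelionis form \eqref{eq:X} without an extra compensator drift. The choice of mark $z=2$ above handles this. Alternatively, one can absorb the drift $-(t\wedge T)$ into $b$, which is locally bounded; but then the statement's $X=W+J$ would have to be matched exactly, so we use the mark choice.
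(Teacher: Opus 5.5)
Your overall plan is valid, but one step in your check of (i) is wrong. The relabelling $\lambda=\delta_2$, $\delta=(z/2)\ind_{\{s\le T\}}$ does not remove the compensator. The truncation in \eqref{eq:X} is applied to $\norm{\delta}$, not to the mark $z$. At the only atom that produces a jump, $\delta(T,2)=1$, so it still falls in $(\delta\ind_{\{\norm{\delta}\le1\}})\star(p-q)$. Since $\Delta X_T=1$, no relabelling of marks can change this. With $b=0$ your representation therefore produces $W+J-(t\wedge T)$, not $X$. The repair is exactly the alternative you dismissed: take $b_s=\ind_{\{s\le T\}}$. This is a choice of coefficient in the representation, not a modification of $X$. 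It is bounded and predictable, and \eqref{eq:X} then reproduces $W+J$ exactly. It is the paper's decomposition $J_s=\int_0^s(1-J_{u-})\,du+\int_0^s(1-J_{u-})\,d(N_u-u)$. With this fix, (i)--(iii) hold, and the rest of your argument is correct: $F_t(f)=J_t$, $Z_t(f)=4Z\ind_{\{T\le t\}}$, and the second moments $16(1-e^{-t})\neq8(1-e^{-t})$ differ.

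On the route: you read the limit off Theorem~\ref{thm:main}. This is logically legitimate, since the theorem's proof never uses this proposition. The paper does not invoke the theorem; it computes directly.
\begin{itemize}
\item Conditionally on $T$, the grid is deterministic.
\item The interval $(L_n,R_n]$ containing $T$ has length exactly $1/n$, because its length is fixed at its left endpoint, where $\theta=1$.
\item Expanding $(G_n+1)^4$, with $G_n$ the Brownian increment over that interval, leaves the main term $4\sqrt n\,G_n\ind_{\{T\le t\}}$, and $\sqrt n\,G_n\mid T\sim N(0,1)$.
\item The non-jump intervals are controlled by the expectation bound $3t/\sqrt n$, and the boundary effect by $\Pp(T\le t<R_n)\le1/n$.
\end{itemize}
Your argument is shorter, but it rests on all of Sections~\ref{sec:marks}--\ref{sec:proof} and reduces the example to a restatement of the theorem's formula. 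The paper's elementary computation is self-contained. It shows transparently why $\theta_{T-}$ rather than $\theta_T$ appears, and it confirms that scale independently of the mark-lemma machinery, which is the point of the example.
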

\end{samepage}

\begin{proof}
The representation
\[
 J_s=\int_0^s(1-J_{u-})\,dN_u
     =\int_0^s(1-J_{u-})\,du
       +\int_0^s(1-J_{u-})\,d(N_u-u)
\]
verifies (H), with bounded coefficients and Brownian coefficient
$\sigma\equiv1$. Moreover, $1\le\theta\le2$, $\theta$ is adapted and
c\`adl\`ag, and $f''(x)=12x^2=o(|x|)$.

Almost surely, $nT$ is not an integer for any $n$. Until the first grid
point after $T$, all steps have length $1/n$. Thus the interval
containing $T$ has endpoints
\[
 L_n=\frac{\lfloor nT\rfloor}{n},\qquad
 R_n=\frac{\lfloor nT\rfloor+1}{n},\qquad R_n-L_n=\frac1n.
\]
Subsequent steps have length $1/(2n)$. Conditional on $T$, the whole
grid is deterministic and $W$ remains Brownian. Write
\[
 G_n=W_{R_n}-W_{L_n},\qquad
 \xi_i^n=W_{\tau_i^n}-W_{\tau_{i-1}^n},\qquad
 h_i^n=\tau_i^n-\tau_{i-1}^n.
\]
Expanding the fourth power on the interval containing $T$ gives
\begin{align}\label{eq:left-expansion}
 \sqrt n\bigl(V_t^n(f)-J_{\eta_n(t)}\bigr)
 &=\sqrt n
   \sum_{\substack{1\le i\le N_n(t)\\
                    T\notin(\tau_{i-1}^n,\tau_i^n]}}
        (\xi_i^n)^4
   +4\sqrt n\,G_n\ind_{\{R_n\le t\}}\notag\\
 &\quad+\sqrt n\bigl(6G_n^2+4G_n^3+G_n^4\bigr)
                         \ind_{\{R_n\le t\}}.
\end{align}
The first term is nonnegative, and its conditional expectation is at
most
\[
 3\sqrt n\sum_{i=1}^{N_n(t)}(h_i^n)^2
 \le 3\sqrt n\,\frac1n\sum_{i=1}^{N_n(t)}h_i^n
 \le\frac{3t}{\sqrt n}.
\]
Also $G_n\mid T\sim N(0,1/n)$, so the last term of
\eqref{eq:left-expansion} tends to zero in $L^1$.
Since $0<R_n-T\le1/n$ and the density of $T$ is bounded by one,
\[
 \Pp(T\le t<R_n)\le
 \Pp\bigl((t-1/n)^+<T\le t\bigr)\le\frac1n.
\]
Consequently both the change from $J_{\eta_n(t)}$ to $J_t$ and the
change from $\ind_{\{R_n\le t\}}$ to $\ind_{\{T\le t\}}$ produce
terms tending to zero in probability, even after multiplication by
their respective factors in \eqref{eq:left-expansion}. Hence
\[
 \sqrt n\bigl(V_t^n(f)-J_t\bigr)
 =4\sqrt n\,G_n\ind_{\{T\le t\}}+o_{\Pp}(1).
\]
The conditional law of $\sqrt n\,G_n$ given $T$ is $N(0,1)$ for every
$n$, which proves \eqref{eq:left-example}.

Here $\theta_{T-}=1$, $\theta_T=2$, and $f'(\Delta X_T)=4$.
The corrected mark therefore has the law of
\[
 4\left(\sqrt U\,\Psi^-+\sqrt{1-U}\,\Psi^+\right)\sim N(0,16).
\]
Using $\theta_T$ instead would multiply this mark by $1/\sqrt2$,
giving $N(0,8)$. Since $\Pp(T\le t)=1-e^{-t}>0$, these yield
different mixture laws at time $t$. Thus the post-jump sampling value
fails even for convergence in law, and hence also for stable
convergence.
\end{proof}

The distinction comes from the recursion \eqref{eq:grid}: the length
of the interval containing $T$ is fixed at its left endpoint.
The change in $\theta$ at $T$ can affect subsequent intervals only.
In general, the sampled value at the left endpoint approaches
$\theta_{T-}$; the post-jump Brownian coefficient in \eqref{eq:R}
nevertheless remains $\sigma_T$.

\subsection{Failure under mere positivity and adaptedness}

\begin{proposition}[Failure under mere positivity and adaptedness]
\label{prop:unbounded-reciprocal}
Let $X=W$, let $f(x)=x^4$, and let the deterministic sampling intensity
be
\begin{equation}\label{eq:degenerate-theta}
 \theta_s=
 \begin{cases}
  1,&s=0,\\
  s^{3/4},&0<s\le1,\\
  1,&s>1.
 \end{cases}
\end{equation}
Then $\theta$ is positive, adapted, and bounded, and its grid is
locally finite with mesh tending to zero. Nevertheless, for every
deterministic $t>0$ and every $\varepsilon>0$,
\begin{equation}\label{eq:degenerate-failure}
 \liminf_{n\to\infty}\Pp\bigl(\sqrt n\,V_t^n(f)>\varepsilon\bigr)
 \ge\Pp(Z^4>\varepsilon)>0,\qquad Z\sim N(0,1).
\end{equation}
In particular, the proposed convergence to the jump limit
$Z_t(f)=0$ fails.
\end{proposition}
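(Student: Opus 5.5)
The plan is to analyze the deterministic grid directly. Since $\theta_0=1$, the first step is $\tau_1^n=1/n$. After that the recursion reads $\tau_{i+1}^n=\tau_i^n+n^{-1}(\tau_i^n)^{-3/4}$ while $\tau_i^n\le1$. The key observation is that the \emph{first} interval $(0,1/n]$ already contributes a nondegenerate amount at the $\sqrt n$ scale. Its increment is $W_{1/n}$, so
\[
 \sqrt n\,(W_{1/n})^4 \overset{d}{=} n^{-3/2}Z^4,
\]
which is negligible. The damage therefore has to come from the second step:
\[
 h_2^n=\frac{1}{n\,\theta_{1/n}}=\frac1n\,n^{3/4}=n^{-1/4}.
\]
Then $\sqrt n\,(W_{\tau_2^n}-W_{\tau_1^n})^4\overset{d}{=}\sqrt n\,(h_2^n)^2Z^4=Z^4$ exactly, with $Z\sim N(0,1)$. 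This is the source of the lower bound in \eqref{eq:degenerate-failure}.

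The steps are as follows.

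\textbf{Basic properties.} First verify that $\theta$ is deterministic, hence adapted, and that $0<\theta\le1$.

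\textbf{Local finiteness and vanishing mesh.} On $(0,1]$ the steps are $n^{-1}s^{-3/4}$ evaluated at a point $s\ge1/n$, so every step is at most $n^{-1/4}\to0$. Steps are at least $1/n$ everywhere, since $\theta\le1$, so the grid is locally finite and $\tau_i^n\to\infty$. Beyond time $1$ all steps equal $1/n$.

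\textbf{The second interval lies inside $[0,t]$.} For fixed $t>0$ and $n$ large, $\tau_2^n=1/n+n^{-1/4}\le t$. Hence $N_n(t)\ge2$ and the second increment is included in $V_t^n(f)$.

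\textbf{Positivity gives the lower bound.} Since $f(x)=x^4\ge0$, every term of $V_t^n(f)$ is nonnegative. Dropping all terms except the second gives
\[
 \Pp\bigl(\sqrt n\,V_t^n(f)>\varepsilon\bigr)\ge\Pp\bigl(\sqrt n\,(W_{\tau_2^n}-W_{\tau_1^n})^4>\varepsilon\bigr)=\Pp(Z^4>\varepsilon)
\]
for all large $n$. This yields the $\liminf$ bound, and $\Pp(Z^4>\varepsilon)>0$ is immediate.

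\textbf{Conclusion.} Since $X=W$ is continuous, $F_t(f)=0$ and the jump limit is $Z_t(f)=0$. If $\sqrt n\,V_t^n(f)\to0$ stably, hence in probability, this would contradict the bound above.

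No step is really hard. The main point needing care is the exact bookkeeping of the grid: $\theta$ is evaluated at the left endpoint $\tau_1^n=1/n$, which gives $\theta_{1/n}=n^{-3/4}$ and hence the step $n^{-1/4}$. One should also note that only one interval is needed, so the behaviour of the rest of the grid is irrelevant beyond finiteness of $N_n(t)$.
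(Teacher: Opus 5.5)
Your proposal is correct and follows the paper's argument: isolate the second grid interval, of length $n^{-1/4}$ so that $\sqrt n\,(h_2^n)^2=1$, drop the other nonnegative summands of $V_t^n(f)$, and note that convergence to the constant $Z_t(f)=0$ would force convergence in probability. The one slip is your opening sentence, which credits the nondegenerate contribution to the first interval $(0,1/n]$; as your own next line shows, that interval contributes only $n^{-3/2}Z^4$ in law, and the nondegenerate term comes from the second interval.
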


\begin{proof}
The first two grid intervals satisfy
\[
 \tau_1^n=\frac1n,\qquad
 \tau_2^n-\tau_1^n
 =\frac1{n(1/n)^{3/4}}=n^{-1/4}.
\]
Since $\theta_s\le1$, every step is at least $1/n$, and
$\tau_i^n\ge i/n$, proving local finiteness. For $i\ge1$, the
inequality $\tau_i^n\ge1/n$ implies
$\theta_{\tau_i^n}\ge n^{-3/4}$. Thus
\[
 \sup_{i\ge0}(\tau_{i+1}^n-\tau_i^n)\le n^{-1/4}\longrightarrow0.
\]
For fixed $t>0$, the second interval is completed before $t$ for all
sufficiently large $n$. Put
\[
 Q_n=\sqrt n\left(W_{\tau_2^n}-W_{\tau_1^n}\right)^4.
\]
The Brownian increment has variance $n^{-1/4}$, and
$\sqrt n\,(n^{-1/4})^2=1$, so $Q_n$ has the law of $Z^4$ for every
$n$. All summands of $V_t^n(f)$ are nonnegative; hence
$\sqrt n\,V_t^n(f)\ge Q_n$ for all sufficiently large $n$. This proves
\eqref{eq:degenerate-failure}. Brownian motion has no jumps, so both
$F_t(f)$ and the proposed limit \eqref{eq:Z} vanish. Convergence in
law to zero would imply convergence in probability to zero, contrary
to \eqref{eq:degenerate-failure}.
\end{proof}

In \eqref{eq:degenerate-theta}, $1/\theta$ is unbounded on every
interval $[0,H]$ with $H>0$, and $\theta$ is not right-continuous at
zero. This example therefore isolates the failure of a claim based
only on positivity and adaptedness. The next example shows that the
same obstruction persists when c\`adl\`ag regularity is retained.

\subsection{Failure with a strictly positive c\`adl\`ag intensity}

\begin{proposition}[The reciprocal bound cannot simply be omitted]
\label{prop:cadlag-reciprocal}
Let $X=W$, let $f(x)=x^4$, and define the deterministic process
\begin{equation}\label{eq:cadlag-theta}
 \theta_s=
 \begin{cases}
  (1-s)^3,&0\le s<1,\\
  1,&s\ge1.
 \end{cases}
\end{equation}
All the hypotheses of Theorem~\ref{thm:main} hold except for the
local boundedness of $1/\theta$. The grid is locally finite, and the
partition of $[0,1]$ formed by its completed intervals and the added
terminal point $1$ has mesh at most $n^{-1/4}$. Nevertheless, for
every $\varepsilon>0$,
\begin{equation}\label{eq:cadlag-failure}
 \liminf_{n\to\infty}
 \Pp\bigl(\sqrt n\,V_1^n(f)>\varepsilon\bigr)
 \ge\Pp(Z^4>64\varepsilon)>0,\qquad Z\sim N(0,1).
\end{equation}
Thus the fixed-time convergence to the jump limit $Z_1(f)=0$ fails.
\end{proposition}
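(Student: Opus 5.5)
The plan is to exploit that $\theta$ is deterministic, so the grid is deterministic, and then to locate a single completed interval inside $[0,1]$ whose length is of exact order $n^{-1/4}$. The hypotheses are checked first. $\theta$ is deterministic, hence adapted, and it is continuous on $[0,1)$ and constant on $[1,\infty)$, hence c\`adl\`ag. It satisfies $0<\theta\le1$, so its supremum is locally bounded. Its infimum on $[0,H]$ with $H\ge1$ is $0$, because $\theta_{1-}=0$. Since $X=W$, (H) holds trivially, and $f(x)=x^4$ satisfies \eqref{eq:f}. Local finiteness follows from $\theta\le1$: every step is at least $1/n$, so $\tau_i^n\ge i/n$.

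Write $u_i=1-\tau_i^n$ while $\tau_i^n<1$. The recursion \eqref{eq:grid} becomes
\[
 u_{i+1}=g(u_i),\qquad g(u)=u-\frac1{nu^3},
\]
and the step from $\tau_i^n$ is $h_i=1/(nu_i^3)$. That step stays within $[0,1]$ exactly when $h_i\le u_i$, i.e.\ when $u_i\ge n^{-1/4}$. Because the $u_i$ decrease by at least $1/n$ per step, there is a first index $k$ with $u_k<n^{-1/4}$ (if $u_k=0$ we are already at $1$). Since $u_0=1\ge n^{-1/4}$, we have $k\ge1$.

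For $i<k$ we have $u_i\ge n^{-1/4}$, so $h_i\le u_i^{-3}n^{-1}\le n^{-1/4}$. The final piece $[\tau_k^n,1]$ has length $u_k<n^{-1/4}$. This gives the stated mesh bound. The step from $\tau_k^n$ overshoots $1$, and from then on all steps equal $1/n$, so the grid beyond $1$ is again locally finite.

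The key step is a lower bound on the last completed interval $[\tau_{k-1}^n,\tau_k^n]$, of length $h_{k-1}=u_{k-1}-u_k$. The function $g$ is increasing on $(0,\infty)$. Suppose $u_{k-1}>2n^{-1/4}$. Then
\[
 u_k=g(u_{k-1})>g(2n^{-1/4})=2n^{-1/4}-\tfrac18n^{-1/4}>n^{-1/4},
\]
which contradicts the choice of $k$. Hence $n^{-1/4}\le u_{k-1}\le2n^{-1/4}$, and therefore
\[
 h_{k-1}=\frac{1}{nu_{k-1}^3}\ge\frac{n^{-1/4}}8.
\]
Since $\tau_k^n\le1$, this interval is counted in $V_1^n(f)$.

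To conclude, define $Q_n=\sqrt n\,(W_{\tau_k^n}-W_{\tau_{k-1}^n})^4$. It has the law of $\sqrt n\,h_{k-1}^2Z^4$, and $\sqrt n\,h_{k-1}^2\ge1/64$. All summands of $V_1^n(f)$ are nonnegative, so $\sqrt nV_1^n(f)\ge Q_n$. This gives
\[
 \Pp\bigl(\sqrt n\,V_1^n(f)>\varepsilon\bigr)\ge\Pp(Z^4>64\varepsilon)
\]
for every $n$, which is \eqref{eq:cadlag-failure}. Because $W$ has no jumps, $F_1(f)=Z_1(f)=0$, and convergence in law to the constant $0$ would force convergence in probability, which is excluded.

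The only delicate point is the monotonicity argument that pins $u_{k-1}$ into $[n^{-1/4},2n^{-1/4}]$; the rest is bookkeeping.
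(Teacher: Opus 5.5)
Your proof is correct and follows essentially the same route as the paper: both take the last completed interval before $1$, show its left endpoint lies at distance between $n^{-1/4}$ and $2n^{-1/4}$ from $1$, and conclude $\sqrt n\,h^2\ge 1/64$. The differences are cosmetic. You get $u_{k-1}\le 2n^{-1/4}$ from monotonicity of $g(u)=u-1/(nu^3)$, whereas the paper uses $d_n\le r_n+h_n<2r_n$ directly, and your choice of index makes the bound hold for every $n\ge1$ rather than only for $n>1$.
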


\begin{proof}
The process $\theta$ is adapted, satisfies $0<\theta_s\le1$ for
every $s$, and has c\`adl\`ag paths. At the only discontinuity,
$\theta_{1-}=0$ and $\theta_1=1$. Hence
$\sup_{0\le s\le1}1/\theta_s=\infty$. The assumptions on $X$ and
$f$ hold as in Proposition~\ref{prop:unbounded-reciprocal}.
Every step is at least $1/n$, so $\tau_i^n\ge i/n$ and the grid
has no finite accumulation point.

Put $r_n=n^{-1/4}$. For $n>1$, let
\[
 k_n=\min\{i:\tau_i^n\ge1-r_n\},\qquad
 a_n=\tau_{k_n-1}^n,\qquad b_n=\tau_{k_n}^n,
 \qquad h_n=b_n-a_n.
\]
The index is finite, and $a_n<1-r_n\le b_n$. With $d_n=1-a_n$,
the sampling recursion gives
\[
 d_n>r_n,\qquad
 h_n=\frac1{n d_n^3}<\frac1{n r_n^3}=r_n.
\]
It follows that $b_n=a_n+h_n<1$, so this interval is already
completed at the time under consideration. Since $b_n\ge1-r_n$,
we also have $d_n-h_n\le r_n$ and therefore $d_n<2r_n$. Consequently,
\begin{equation}\label{eq:cadlag-long-interval}
 \frac{r_n}{8}<h_n<r_n,\qquad b_n<1,\qquad
 \sqrt n\,h_n^2>\frac1{64}.
\end{equation}
All grid times are deterministic, so
$W_{b_n}-W_{a_n}$ has the law of $\sqrt{h_n}\,Z$. Nonnegativity of
the summands yields
\[
 \sqrt n\,V_1^n(f)\ge\sqrt n\,(W_{b_n}-W_{a_n})^4.
\]
The term on the right has the law of $\sqrt n\,h_n^2Z^4$;
\eqref{eq:cadlag-long-interval} proves \eqref{eq:cadlag-failure}.
Since $X$ is continuous, $F_1(f)=Z_1(f)=0$. The probability bound
rules out convergence in probability to zero, and hence convergence
in law or stable convergence to that constant.

For completeness, consider any completed interval with left endpoint
$a<1$, length $h$, and $d=1-a$. Then
$h=1/(nd^3)\le d$, implying $d\ge r_n$ and $h\le r_n$.
If $\eta_n(1)<1$, its next grid point is larger than $1$. With
$d=1-\eta_n(1)$, this gives $1/(nd^3)>d$, so $d<r_n$.
If $\eta_n(1)=1$, the terminal remainder is zero. Thus
\[
 \max\left\{
   \max_{1\le i\le N_n(1)}(\tau_i^n-\tau_{i-1}^n),\,
   1-\eta_n(1)\right\}\le n^{-1/4},
\]
which proves the asserted mesh bound.
\end{proof}

\begin{remark}[Interpretation of the examples]\label{rem:reciprocal-scope}
Proposition~\ref{prop:left-scale} determines the left-limit scale
in \eqref{eq:R}. The other two examples concern the disappearance of
the continuous contribution: a Brownian increment on an interval of
length $h$ contributes a fourth power of order $h^2$, which need not
vanish after multiplication by $\sqrt n$. The local reciprocal bound
in \eqref{eq:theta} excludes the exhibited intervals of length
comparable to $n^{-1/4}$ by giving steps of order $1/n$ after
localization. Proposition~\ref{prop:unbounded-reciprocal} also lacks
right continuity, whereas Proposition~\ref{prop:cadlag-reciprocal}
retains every other hypothesis of the theorem. The latter proves
that the reciprocal bound cannot be deleted without a replacement
assumption; it does not assert that every model with unbounded
$1/\theta$ fails to converge. Only bounds on each finite horizon
are required, not a uniform bound over all $s\ge0$.
\end{remark}

\section{Joint stable convergence of the local marks}\label{sec:marks}

We first reduce to bounded coefficients and record the geometry of the
sampling intervals. We then establish joint stable convergence of the
local positions and Brownian increments using the information available
at each left endpoint.

\subsection{Localization and the sampling intervals}\label{sec:localization}

For the proof on a fixed horizon we work under bounded versions of
the hypotheses. The standard localization from (H) to (SH)
\cite[Lemma 4.4.9]{JP}, together with clipping of $\theta$, reduces the
proof to the case in which, for deterministic constants $K,C,c>0$,
\begin{equation}\label{eq:bounded}
 \norm{b_s}+\norm{\sigma_s}+\norm{X_s}\le K,\qquad
 \norm{\delta(s,z)}\le\Gamma(z)\le K,\qquad
 \int_E\Gamma^2\,d\lambda\le K,
 \qquad c\le\theta_s\le C.
\end{equation}
The reduction is local for the pair $(X,\theta)$: if two such pairs
coincide before a stopping time, their recursively constructed grids
and observed functionals coincide before that time. Their limit kernels
also coincide there. The exit probabilities tend to zero as the
localization levels increase, by \eqref{eq:theta} and (H). Stable
convergence therefore transfers back by testing against bounded $B$ and
bounded continuous functions. Enlarging the localization horizon slightly
also covers the interval immediately after any time being considered.
We may replace $f$ outside a fixed ball containing all increments and
jumps by a compactly supported $C^2$ function.

Write $a_i^n=\tau_{i-1}^n$, $b_i^n=\tau_i^n$, and
$h_i^n=b_i^n-a_i^n$. Under \eqref{eq:bounded},
\begin{equation}\label{eq:steps}
 \frac1{nC}\le h_i^n\le\frac1{nc}.
\end{equation}
Each $b_i^n$ is $\F_{a_i^n}$-measurable. The grid times are stopping
times; moreover the positive grid times are predictable, announced by
$a_i^n+(1-1/k)h_i^n$ as $k\to\infty$. Hence they almost surely avoid
the atoms of any finite-intensity restriction of $p$. The same is true
for all atoms capable of producing a nonzero jump of $X$, by a countable
union over $\{\Gamma>1/m\}$.

For a time $S$ which is not a grid point set
\[
 \ell_n(S)=a_i^n,\qquad r_n(S)=b_i^n
 \quad\text{when }a_i^n<S<b_i^n.
\]
Then, pathwise at every $S>0$ of this kind,
\begin{equation}\label{eq:geometry}
 \ell_n(S)<S<r_n(S),\qquad
 \ell_n(S)\to S,\quad r_n(S)\to S,\qquad
 n(r_n(S)-\ell_n(S))\to\theta_{S-}^{-1}.
\end{equation}
The last limit follows from
$n(r_n(S)-\ell_n(S))=1/\theta_{\ell_n(S)}$ and the existence of the
left limit of $\theta$.

\subsection{The joint mark limit}

The following argument is the part of the proof that handles the
dependence of the grid on the observed process. It conditions at the
left endpoint of an interval, where its length is already known.
The limit has the same uniform--Gaussian mark structure as the
regular-grid result in \cite[Proposition 4.4.10]{JP}, with the additional
sampling factor $\theta_{S-}^{-1/2}$. The proof below establishes this
structure for the present endogenous grid.
\begin{lemma}\label{lem:marks}
Let $A\subset E$ be deterministic with $0<\lambda(A)<\infty$, and let
$S_1<S_2<\cdots$ be the successive atoms of $p(\cdot,A)$. Assume
\eqref{eq:bounded} for $\theta$. Define
\begin{align*}
 U_n(S)&=\frac{S-\ell_n(S)}{r_n(S)-\ell_n(S)},\\
 B_n^-(S)&=\sqrt n\,(W_S-W_{\ell_n(S)}),\qquad
 B_n^+(S)=\sqrt n\,(W_{r_n(S)}-W_S).
\end{align*}
For every finite number of atoms, jointly and stably relative to $\F$,
\begin{equation}\label{eq:marklimit}
 \bigl(U_n(S_j),B_n^-(S_j),B_n^+(S_j)\bigr)_j
 \st
 \left(U_j,\sqrt{\frac{U_j}{\theta_{S_j-}}}\Psi_j^-,
            \sqrt{\frac{1-U_j}{\theta_{S_j-}}}\Psi_j^+\right)_j.
\end{equation}
The auxiliary triples are independent across $j$ and independent of $\F$.
\end{lemma}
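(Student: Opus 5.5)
The proof tests the statement directly: we show that
\[
 \E\Bigl[B\prod_j\varphi_j\bigl(U_n(S_j),B_n^-(S_j),B_n^+(S_j)\bigr)\Bigr]
\]
converges to the corresponding expectation of the limit, for bounded $\F$-measurable $B$ and bounded Lipschitz $\varphi_j$. Two standard reductions come first. Because $\theta$, the grid and $(W,p|_A)$ are all adapted, we may take $B$ to be $\F_H$-measurable for some finite $H$ and then bounded and continuous in finitely many increments of $(W,p|_A)$ over a fixed deterministic partition. Second, it suffices to treat the atoms $S_1,\dots,S_J$ on the event $\{S_J\le H'\}$ with $H'<H$ fixed, since the complementary probability can be made small.

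The central step is conditioning at left endpoints. Fix an atom $S_j$ and the (random) index $i$ of the interval containing it. The event $\{S_j\in(a_i^n,b_i^n)\}$ is determined by the following data: the count of $p(\cdot,A)$ on $[0,a_i^n]$ equals $j-1$, and there is an atom in $(a_i^n,b_i^n)$. Given $\F_{a_i^n}$, the length $h_i^n$ is known. By the strong Markov / independent-increment property of $(W,p|_A)$ after the stopping time $a_i^n$, the process $(W_{a_i^n+\cdot}-W_{a_i^n},\,p|_A(a_i^n+\cdot))$ is independent of $\F_{a_i^n}$ with its standard law. On the interval, the event ``exactly one atom'' has conditional probability $\lambda(A)h_i^n+O(h_i^n)^2$. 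Given exactly one atom, its position is uniform on the interval, so $U_n(S_j)$ is exactly $\mathrm{Unif}(0,1)$ conditionally. Given $U_n$, the Brownian pieces $B_n^-,B_n^+$ are independent centered Gaussians with covariances $nh_i^nU_n I$ and $nh_i^n(1-U_n)I$; here $W$ is independent of $p$ on the interval. Moreover $nh_i^n=1/\theta_{a_i^n}$. Two atoms in the same interval have total probability $O(1/n)$ over $[0,H]$ by \eqref{eq:steps}, and are discarded.

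Summing over $i$, we write the test expectation as
\[
 \E\Bigl[B\,\Phi\bigl(\theta_{\ell_n(S_j)}\bigr)\Bigr]+o(1),
\]
where $\Phi(\vartheta)$ is the Gaussian--uniform average of $\varphi_j$ with scale $\vartheta^{-1/2}$. This average is continuous in $\vartheta\in[c,C]$, and $\theta_{\ell_n(S_j)}\to\theta_{S_j-}$ by \eqref{eq:geometry}, so dominated convergence identifies the limit. We then proceed inductively over $j=J,J-1,\dots$, conditioning at the left endpoint of the interval containing the last atom $S_J$. Everything before that endpoint, including the marks of earlier atoms and the variables $\theta_{\ell_n(S_k)}$, is $\F_{a^n}$-measurable, which produces the product structure and the independence across $j$.

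The main obstacle is that $B$ is not $\F_{\ell_n(S_J)}$-measurable: it depends on the future of $W$ and $p$, including the very increments defining the marks. I would handle this by approximation. Take $B=g(\text{increments over a fixed partition }0=t_0<\dots<t_m=H)$. The atom $S_J$ lies in the interior of a partition cell with probability one. Since $r_n-\ell_n\to0$, on the interval $(\ell_n,r_n)$ the functional $B$ changes by only $o_\Pp(1)$ when we replace the increments $W_{r_n}-W_{\ell_n}$ by zero and the atom inside by one at position $\ell_n$. After this replacement, $B$ splits into an $\F_{\ell_n}$-measurable part and a part depending on the increments after $r_n$. The latter is independent of the local marks by the independent-increment property at the stopping time $r_n$ (which is $\F_{\ell_n}$-measurable). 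This decoupling shows the marks are asymptotically independent of all of $\F$, giving stability. Continuity of $g$ and the Lipschitz property of the test functions control the replacement error. A monotone class argument then extends the result to general bounded $\F_H$-measurable $B$, and hence to $\F$ by the first reduction.
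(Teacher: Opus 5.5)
\paragraph{A genuine gap: the class of test variables $B$.} Your conditioning at left endpoints is correct and matches the paper's centered identity \eqref{eq:center}. Given $\F_{a_i^n}$, the atom position is uniform, the Brownian pieces are Gaussian, and the scale is $nh_i^n=1/\theta_{a_i^n}$. Your first reduction, to $\F_H$-measurable $B$, is also fine. The problem is the next step. There you pass to continuous functions of finitely many increments of $(W,p|_A)$ and then invoke a monotone class argument.
\begin{itemize}
\item The monotone class theorem reaches only $\sigma(W,p|_A)$ on $[0,H]$. In general $\F_H$ is much larger, since it contains $p|_{A^c}$, $\sigma$, $\F_0$ and anything else in the filtration.
\item $\theta$ is merely $\F$-adapted; it need not be a functional of $(W,p|_A)$. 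For example, $\theta$ may be driven by a process independent of $(W,p)$. Then $B=\theta_H$ is $\F_H$-measurable but is not an $L^1$-limit of functionals of $(W,p|_A)$.
\item On a regular grid your reduction would be legitimate. There the marks are $\sigma(W,p|_A)$-measurable and the limit kernel does not depend on $\omega$, so $B$ may be replaced by $\E[B\mid\sigma(W,p|_A)]$.
\item On the recursive grid neither property holds. The grid depends on $\theta$, so the marks are not $\sigma(W,p|_A)$-measurable, and the limit kernel depends on $\theta_{S_j-}$. That conditional-expectation step therefore discards exactly the information the limit depends on.
\item The same difficulty reappears inside your induction. After treating $S_J$, the multiplier becomes $B\,\Phi(\theta_{S_J-})$, which lies outside your continuous class. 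So you need the statement for general $\F$-measurable multipliers in any case.
\item The later use of the lemma also needs stability relative to the full $\F$. In Lemma~\ref{lem:localX} and Step~2 of the main proof, the marks are multiplied by $\sigma_{S-}$, $\sigma_S$ and $Df(\Delta X_S)$.
\end{itemize}

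\paragraph{The missing idea, and how the paper supplies it.} You need a way to decouple the marks from a general $B$ that may depend on the future, including the very increments that define the marks.
\begin{itemize}
\item Your continuity replacement has no analogue for general $B$. The martingale $M_t=\E[B\mid\F_t]$ may jump at the atom $S$. Its jump $\Delta M_S$ may depend on the exact position of $S$ in a non-continuous way, and so be correlated with $U_n(S)$.
\item The paper first writes $\E[BH_i^n\zeta_i^n]=\E[H_i^n\zeta_i^n(M_{b_i}-M_{a_i})]$. It then replaces the increment by $\Delta M_S$, using the c\`adl\`ag property and $\ell_n(S)\uparrow S$, $r_n(S)\downarrow S$.
\item Next it conditions on $\F_S$. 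This integrates out the post-$S$ Brownian piece and leaves the predictable integrand $q_n(S)$.
\item It then represents $\E\sum_S v_S\Delta M_S=\E\int v_sa_s\,ds$, with $a$ the Radon--Nikodym density on the predictable $\sigma$-field.
\item Finally it approximates $a$ in $L^1$ by elementary predictable processes. The conditional time average $\E[\int_{a_i}^{b_i}q_n\,ds\mid\F_{a_i}]=0$ then kills the contribution.
\end{itemize}
Some argument of this kind is needed to control $\Delta M_S$ against the position of $S$ inside its grid interval. Once it is in place, your left-endpoint conditioning and induction over atoms go through, as in the paper's telescoping Step~3.
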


\begin{proof}
Write $\lambda_A=\lambda(A)$ and $N_t=p((0,t]\times A)$.
Fix a horizon $H$ and a bounded continuous function
$\varphi$ on $[0,1]\times\R^{d'}\times\R^{d'}$. For $v>0$ put
\begin{equation}\label{eq:kernel}
 K_\varphi(v)=\int_0^1
 \E\!\left[\varphi\bigl(u,\sqrt{vu}\,G^-,
                           \sqrt{v(1-u)}\,G^+\bigr)\right]du,
\end{equation}
where $G^-,G^+$ are independent standard Gaussian vectors.
This is a bounded continuous function of $v$.

\emph{Step 1: a centered interval identity.}
Suppress the superscript $n$ on interval endpoints. Set
\begin{equation}\label{eq:zeta}
 \zeta_i^n=\sum_{S\in(a_i,b_i]:\Delta N_S=1}
 \left\{\varphi\bigl(U_n(S),B_n^-(S),B_n^+(S)\bigr)
             -K_\varphi(nh_i)\right\}.
\end{equation}
Conditionally on $\F_{a_i}$, $h_i$ is known and the future Brownian
motion and Poisson process have their standard independent laws.
Integration over a Poisson atom in $(a_i,a_i+h_i]$, followed by the
change of variable $s=a_i+u h_i$, gives
\begin{equation}\label{eq:center}
 \E[\zeta_i^n\mid\F_{a_i}]=0.
\end{equation}
This identity counts all atoms in the interval; it does not require that
there be exactly one atom.

\emph{Step 2: testing against the original sigma-field.}
We establish the following weighted identity
\begin{equation}\label{eq:weighted}
 \E\!\left[B\sum_{i:b_i\le H}H_i^n\zeta_i^n\right]\longrightarrow0
\end{equation}
for every bounded $\F$-measurable $B$ and every array $H_i^n$ which is
$\F_{a_i}$-measurable and bounded in absolute value by one.
Let $M_s=\E[B\mid\F_s]$ be its bounded c\`adl\`ag martingale version.
Using \eqref{eq:center}, the left side of \eqref{eq:weighted} is
\[
 \E\sum_{i:b_i\le H}H_i^n\zeta_i^n(M_{b_i}-M_{a_i}).
\]
There are finitely many atoms of $N$ on $[0,H]$. For each such atom $S$,
the c\`adl\`ag property and \eqref{eq:steps} give
\[
 M_{r_n(S)}-M_{\ell_n(S)}-\Delta M_S\longrightarrow0
 \quad\text{almost surely}.
\]
The absolute error in replacing the martingale increment by $\Delta M_S$
in the preceding expectation is bounded by a constant times $N_H$.
It therefore tends to zero in $L^1$, uniformly in the choices of $H_i^n$.

For $a_i<s\le b_i$ define
\begin{align}\label{eq:q}
 q_n(s)={}&\E_G\!\left[
 \varphi\left(\frac{s-a_i}{h_i},\sqrt n(W_s-W_{a_i}),
                                  \sqrt{n(b_i-s)}\,G\right)\right]
 -K_\varphi(nh_i).
\end{align}
At a Poisson time $S$, conditioning on $\F_S$ integrates out the
post-$S$ Brownian increment and replaces the braces in \eqref{eq:zeta}
by $q_n(S)$. The right endpoint $r_n(S)$ is already $\F_S$-measurable,
which justifies this conditional Gaussian calculation.
The process $q_n$, defined on left-open, right-closed grid intervals,
is predictable: $W$ is continuous and the endpoint data on each interval
are known at its left endpoint. It is uniformly bounded.

There exists a predictable process $a_s$, with
$|a_s|\le2\lambda_A\norm{B}_\infty$, such that for every bounded
predictable $v$ supported in $[0,H]$,
\begin{equation}\label{eq:projection}
 \E\sum_{S\le H:\Delta N_S=1}v_S\Delta M_S
       =\E\int_0^H v_s a_s\,ds.
\end{equation}
Indeed, on the predictable sigma-field the signed measure on the left
is dominated in absolute value by
$2\norm{B}_\infty\lambda_A\,d\Pp\,ds$; the Radon--Nikodym theorem
gives \eqref{eq:projection} and its bound.
Consequently the expectation in \eqref{eq:weighted}, up to an $o(1)$
term, equals
\begin{equation}\label{eq:projected}
 \E\sum_{i:b_i\le H} H_i^n\int_{a_i}^{b_i}q_n(s)a_s\,ds.
\end{equation}

Approximate $a$ in $L^1(d\Pp\,ds)$ on $[0,H]$ by bounded elementary
predictable processes, constant on finitely many deterministic intervals
$(t_j,t_{j+1}]$, with each value $\F_{t_j}$-measurable. The error in
\eqref{eq:projected} is bounded by
$2\norm{\varphi}_\infty$ times this $L^1$ error, uniformly in $n$ and
$H_i^n$. For an interval $(a_i,b_i]$ wholly inside one such deterministic
interval, its elementary value is $\F_{a_i}$-measurable and
\begin{equation}\label{eq:timeaverage}
 \E\!\left[\int_{a_i}^{b_i}q_n(s)\,ds\,\middle|\,\F_{a_i}\right]=0.
\end{equation}
This follows directly from \eqref{eq:kernel}: integrate the two independent
Brownian pieces and then integrate the relative position over $[0,1]$.
The grid intervals crossing any of the finitely many $t_j$ have total
length at most a constant times $1/n$. Their contribution vanishes.
First let $n\to\infty$, then remove the elementary approximation.
This proves \eqref{eq:weighted}.

\emph{Step 3: joint convergence and independence of the marks.}
The only interval crossing $H$ can contain an atom only in
$(H-1/(nc),H]$, an event of probability at most $\lambda_A/(nc)$.
On each path, all atoms on $[0,H]$ are in distinct grid intervals for
all sufficiently large $n$. Furthermore, for each atom $S$,
\begin{equation}\label{eq:kernelreplace}
 K_\varphi\bigl(n(r_n(S)-\ell_n(S))\bigr)
       \longrightarrow K_\varphi(\theta_{S-}^{-1}).
\end{equation}

To prove joint convergence, consider product test functions. On $\{N_H=m\}$ choose bounded
continuous functions $\varphi_1,\ldots,\varphi_m$, scaled to have
absolute value at most one. Write
$\varphi_j^n=\varphi_j(U_n(S_j),B_n^-(S_j),B_n^+(S_j))$
and $K_j=K_{\varphi_j}(\theta_{S_j-}^{-1})$. In the telescoping identity
\[
 \prod_{j=1}^m\varphi_j^n-\prod_{j=1}^m K_j
 =\sum_{j=1}^m
   \left(\prod_{q<j}\varphi_q^n\right)
   (\varphi_j^n-K_j)\left(\prod_{q>j}K_q\right),
\]
use \eqref{eq:weighted} with original multiplier
$B\ind_{\{N_H=m\}}\prod_{q>j}K_q$.
In the array $H_i^n$, select $N_{a_i}=j-1$ and multiply the marks of
the preceding $j-1$ atoms, all of whose observation intervals have
ended by $a_i$. This is $\F_{a_i}$-measurable. Once the atoms occupy
distinct intervals it equals exactly the required preceding product.
Use \eqref{eq:kernelreplace} to replace the interval kernel by $K_j$.
The exceptional multiple-atom intervals have vanishing contribution,
by domination by a constant times $N_H$ and eventual pathwise separation.
It follows that
\begin{equation}\label{eq:product}
 \E\!\left[B\ind_{\{N_H=m\}}\prod_{j=1}^m\varphi_j^n\right]
 \longrightarrow
 \E\!\left[B\ind_{\{N_H=m\}}\prod_{j=1}^m K_j\right].
\end{equation}
The right side is the conditional product kernel of the marks displayed
in \eqref{eq:marklimit}.

For completeness, these tests determine joint stable convergence.
The $U_n$ are bounded, and a conditional Brownian/Poisson calculation gives
\[
 \sup_n\E\sum_{S\le H:\Delta N_S=1}
     \bigl(\norm{B_n^-(S)}^2+\norm{B_n^+(S)}^2\bigr)<\infty.
\]
One may include the crossing interval in this bound and use
$h_i\le1/(nc)$ and at most $nCH+1$ intervals. Thus the finite mark
vectors are tight. Finite products of continuous functions determine
their joint laws; applying \eqref{eq:product} also to nonnegative
bounded multipliers and normalizing the resulting finite measures gives
stable convergence. The Poisson tail removes the restriction to bounded
$m$. Finally $H\to\infty$ covers any fixed finite number of atoms.
\end{proof}

\section{Some key lemmas}\label{sec:lemmas}

The first lemma transfers the Brownian mark limit to the increment of
$X$ surrounding an isolated jump. The second gives a uniform estimate
that permits removal of a cutoff near zero, including in the presence
of infinitely many jumps.

\subsection{Local increments of the semimartingale}

The following lemma adapts the local increment approximation in
\cite[Proposition 4.4.10, equations (4.4.19)--(4.4.23)]{JP} to the
recursive grid. We give the argument to verify that the filtration
enlargement and the increment estimates remain valid at its random
endpoints.

\begin{lemma}\label{lem:localX}
Assume \eqref{eq:bounded}. Let $A_m=\{z:\Gamma(z)>1/m\}$, and let
$S$ be an atom of $p(\cdot,A_m)$. On each fixed finite horizon,
\begin{align}\label{eq:localX}
 &\sqrt n\bigl(X_{r_n(S)}-X_{\ell_n(S)}-\Delta X_S\bigr)\notag\\
 &\hspace{1cm}
 =\sigma_{S-}B_n^-(S)+\sigma_SB_n^+(S)+o_{\Pp}(1).
\end{align}
The remainders tend to zero jointly for any finite collection of these
atoms. Accordingly the left sides converge jointly and stably to the
marks \eqref{eq:R}, indexed by these atoms, including atoms for which
$\Delta X_S=0$.
\end{lemma}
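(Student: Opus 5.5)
Fix the atom $S$ of $p(\cdot,A_m)$ and write $\ell=\ell_n(S)$, $r=r_n(S)$. By \eqref{eq:steps}, $r-\ell\le 1/(nc)$. The plan is to decompose the increment $X_r-X_\ell-\Delta X_S$ according to \eqref{eq:X} and show that every piece except the Brownian one is $o_{\Pp}(n^{-1/2})$. Since $A_m$ has finite intensity, we split the jump part into jumps with marks in $A_m$ and the rest. On $(\ell,r]$ the first kind consists, for large $n$ and with probability tending to one, only of $\delta(S,\cdot)$ at the atom $S$. This uses the pathwise separation from Step~3 of Lemma~\ref{lem:marks} and the bound $\Pp(\text{another $A_m$-atom in }(\ell,r])\le \lambda(A_m)/(nc)$ summed over finitely many atoms. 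The compensator of the $A_m$-part over $(\ell,r]$ is bounded by $K\lambda(A_m)/(nc)=O(1/n)$. The drift contributes $O(1/n)$ as well.

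The delicate terms are the small-jump martingale $\int_\ell^r\int_{A_m^c}\delta\,(p-q)$ and the Brownian term. The interval endpoints are random: $\ell$ is a stopping time, $r$ is $\F_\ell$-measurable, but $S$ is not a stopping time relative to which $\ell$ is known. I would therefore avoid conditioning on $S$. Instead, bound the sum over \emph{all} grid intervals of the squared small-jump martingale increment, weighted by the indicator that the interval contains an $A_m$-atom. Write the indicator as $\int_{(a_i,b_i]}p(ds,A_m)$. Then, using the independence of $p$ restricted to $A_m$ and to $A_m^c$ (disjoint mark sets, as recalled in the model section), the conditional expectation given $\F_{a_i}$ is at most $\lambda(A_m)h_i\cdot C h_i\int_{A_m^c}\Gamma^2\,d\lambda$ plus lower-order terms. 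This requires some care, because $\delta$ is predictable and not independent; the rigorous route is to apply Itô's product formula to $N^{A_m}_t-N^{A_m}_{a_i}$ times the squared martingale and use compensators. Summing over $i$ gives $O(n\cdot n^{-2})=O(1/n)$, so $\sqrt n$ times the small-jump term is $o_{\Pp}(1)$. It is in fact $O(n^{-1/2})$ in $L^2$, which is more than enough; no $m\to\infty$ is needed here.

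For the Brownian term, write $\int_\ell^r\sigma\,dW=\sigma_{S-}(W_S-W_\ell)+\sigma_S(W_r-W_S)+\int_\ell^S(\sigma_u-\sigma_{S-})dW_u+\int_S^r(\sigma_u-\sigma_S)dW_u$. The two error integrals must be shown to be $o_{\Pp}(n^{-1/2})$. This is the main obstacle, since $S$ is not a stopping time, so the Itô isometry does not apply directly on $[S,r]$. I would follow Jacod--Protter (4.4.19)--(4.4.23) by enlarging the filtration: $\G_t=\F_t\vee\sigma(p$ restricted to $\R_+\times A_m)$. Under this enlargement $W$ remains a $\G$-Brownian motion, by independence of $W$ and the $A_m$-Poisson part in the joint independent-increment law. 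The coefficient $\sigma$ is only $\F$-adapted, hence $\G$-adapted, and $S$ becomes a $\G_0$-measurable time. The difficulty is that $\ell,r$ depend on the full $\F$-path; I would handle this by bounding instead $\sup_{|v-S|\le 1/(nc)}$ of integrals over deterministic-length windows.

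Concretely, set $w_n=1/(nc)$ and control $n\,\E\int_{(S-w_n)^+}^{S}\|\sigma_u-\sigma_{S-}\|^2du$ and the analogue on $[S,S+w_n]$; both tend to zero by càdlàg regularity and bounded convergence. A Doob-type maximal inequality in $\G$ then controls the stochastic integral uniformly over the subinterval $[\ell,S]\subset[S-w_n,S]$, because $\sigma_u-\sigma_{S-}$ is $\G$-predictable on $u<S$ (with $S$ now $\G_0$-measurable), and similarly after $S$. If the enlargement also requires the $A_m^c$ part to stay a semimartingale, this is guaranteed by the independence noted above. Joint remainder convergence over finitely many atoms is immediate. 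The final stable convergence follows from Lemma~\ref{lem:marks} combined with the $\F$-measurability of $(\sigma_{S_j-},\sigma_{S_j},\theta_{S_j-})$, using the continuous-mapping property of stable convergence.
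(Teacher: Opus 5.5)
\textbf{Gap in the small-jump term.} Write $M=(\delta\ind_{A_m^c})\star(p-q)$. Your weighted bound gives $\E\sum_i p((a_i,b_i]\times A_m)\,(M_{b_i}-M_{a_i})^2\le C\sum_i h_i^2\le C/n$. But the quantity that must vanish is $n\,(M_{r_n(S)}-M_{\ell_n(S)})^2$, so this bound only yields $\E\bigl[n(M_{r_n(S)}-M_{\ell_n(S)})^2\bigr]=O(1)$. The unscaled increment is $O(n^{-1/2})$ in $L^2$, which is exactly the critical order, not ``more than enough''.

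No $L^2$ argument can do better for fixed $m$. In the enlarged filtration, the isometry gives $n\,\E\int_{\ell_n(S)}^{r_n(S)}\int_{A_m^c}\norm{\delta(s,z)}^2\lambda(dz)\,ds$. If $\norm{\delta(s,z)}=\Gamma(z)$, this equals $\E[n(r_n(S)-\ell_n(S))]\int_{\{\Gamma\le1/m\}}\Gamma^2\,d\lambda\ge C^{-1}\int_{\{\Gamma\le1/m\}}\Gamma^2\,d\lambda$, which does not vanish in general.

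The missing idea is the paper's second truncation at a level $u_n\to0$; the paper takes $u_n=n^{-1/4}$.
\begin{itemize}
\item Marks with $\Gamma\le u_n$ have scaled second moment at most $C\int_{\{\Gamma\le u_n\}}\Gamma^2\,d\lambda\to0$.
\item For marks in $A_m^c$ with $\Gamma>u_n$, use rarity rather than size. Such an atom falls in an interval of length at most $C/n$ with probability at most $(C/n)\lambda(\Gamma>u_n)\le Cn^{-1/2}\int\Gamma^2\,d\lambda$.
\item Their compensator, multiplied by $\sqrt n$, is at most $Cn^{-1/2}\int_{\{\Gamma>u_n\}}\Gamma\,d\lambda\le Cn^{-1/4}\int\Gamma^2\,d\lambda$.
\end{itemize}
The remainder is thus $o_{\Pp}(1)$ but not $o(1)$ in $L^2$. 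Your device of weighting by $p((a_i,b_i]\times A_m)$ can replace conditioning for both pieces, but the split itself is indispensable.

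\textbf{Error in the pre-jump Brownian term.} This one is easier to repair. The integrand $\sigma_u-\sigma_{S-}$ is not $\G$-adapted for $u<S$. Making $S$ known at time $0$ does not make $\sigma_{S-}$ known before $S$, since $\sigma_{S-}$ depends on the path of $\sigma$ up to $S$. So neither the isometry nor Doob's inequality applies to $\int_{\ell_n(S)}^S(\sigma_u-\sigma_{S-})\,dW_u$, and the same defect persists on your deterministic windows.

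Freeze instead at an adapted value. The paper uses $\sigma_{\ell_n(S)}$; on your window, $\sigma_{(S-w_n)^+}$ would also work. Then control $\sqrt n\int_{\ell_n(S)}^S(\sigma_u-\sigma_{\ell_n(S)})\,dW_u$ by the isometry. Finally use $\sigma_{\ell_n(S)}\to\sigma_{S-}$ together with tightness of $B_n^-(S)$ from Lemma~\ref{lem:marks}.

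Your worry about the random endpoints is unnecessary. Since $\tau_{i+1}^n$ is $\F_{\tau_i^n}$-measurable and $S$ is $\G_0$-measurable, $\ell_n(S)$ is the first grid time whose successor exceeds $S$, hence a $\G$-stopping time. The same holds for $r_n(S)$. After $S$, freezing at $\sigma_S$ is legitimate, as you propose.

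The rest of your plan agrees with the paper: the separation of $A_m$-atoms, the $O(1/n)$ bounds for the drift and the $A_m$-compensator, the initial enlargement by $p|_{A_m}$, and the final appeal to Lemma~\ref{lem:marks} with $\F$-measurable coefficients.
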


\begin{proof}
Set
\begin{align*}
 J^{(m)}&=(\delta\ind_{A_m})\star p,\qquad Y^{(m)}=X-J^{(m)},\\
 b_s^{(m)}&=b_s-\int_{A_m}\delta(s,z)
                        \ind_{\{\norm{\delta(s,z)}\le1\}}\lambda(dz).
\end{align*}
For $m\ge1$,
\[
 Y_t^{(m)}=X_0+\int_0^t b_s^{(m)}\,ds
       +\int_0^t\sigma_s\,dW_s
       +(\delta\ind_{A_m^c})\star(p-q)_t.
\]
The drift $b^{(m)}$ is bounded for fixed $m$, since
$\int_{A_m}\Gamma\,d\lambda\le m\int\Gamma^2\,d\lambda$.

Enlarge the filtration initially by the full restriction $p|_{A_m}$.
In the resulting filtration $(\G_s)$, $W$ is still Brownian and
$p|_{A_m^c}$ still has compensator $ds\,\lambda|_{A_m^c}(dz)$.
This follows from the joint independent increments of the drivers;
it is also the finite-intensity enlargement used in
\cite[Proposition 2.1.10]{JP}.
The time $S$ is now $\G_0$-measurable. Moreover, $\ell_n(S)$ is
a $\G$-stopping time: starting from $\tau_i^n$, the test
$\tau_{i+1}^n>S$ is $\G_{\tau_i^n}$-measurable because the next endpoint
is known there. The first index passing that test therefore selects a
stopping time. The same holds for $r_n(S)$.

Eventually the interval containing $S$ contains no other $A_m$-atom.
On this event its increment after removal of $\Delta X_S$ is the
increment of $Y^{(m)}$. Its drift, multiplied by $\sqrt n$, is bounded
by $C_m/\sqrt n$.

To bound its compensated-jump increment, put $u_n=n^{-1/4}$.
For marks with $\Gamma\le u_n$, the second moment after multiplication
by $\sqrt n$ is at most
\[
 C\int_{\{\Gamma\le u_n\}}\Gamma^2\,d\lambda\longrightarrow0.
\]
For the remaining marks in $A_m^c$, the probability of an atom in the
interval is at most
\[
 \frac Cn\lambda(\Gamma>u_n)
 \le Cn^{-1/2}\int\Gamma^2\,d\lambda\longrightarrow0.
\]
Their compensator, after multiplication by $\sqrt n$, has norm at most
\[
 \frac C{\sqrt n}\int_{\{\Gamma>u_n\}}\Gamma\,d\lambda
 \le Cn^{-1/4}\int\Gamma^2\,d\lambda\longrightarrow0.
\]
All three bounds apply to stopping intervals in the enlarged filtration
and use only their deterministic upper length bound $C/n$.

For the Brownian part before $S$, first freeze at the adapted value
$\sigma_{\ell_n(S)}$. The isometry bounds the squared error, multiplied
by $n$, by
\[
 C\E\!\left[
 \sup_{\ell_n(S)\le u<S}
 \norm{\sigma_u-\sigma_{\ell_n(S)}}^2\right]\longrightarrow0.
\]
The assertion is restricted to $\{S\le H\}$ when necessary; this event
is $\G_0$-measurable. Boundedness and the left limit of $\sigma$ give
the convergence. Next $\sigma_{\ell_n(S)}\to\sigma_{S-}$, and
$B_n^-(S)$ is tight by Lemma~\ref{lem:marks}.
After $S$, freeze directly at $\sigma_S$; right continuity and the
same isometry give the corresponding vanishing error. This proves
\eqref{eq:localX}. Joint convergence follows for finitely many atoms.
Lemma~\ref{lem:marks} and stable multiplication by the original
coefficients then give the final assertion.
\end{proof}

\subsection{A uniform estimate for the small-function remainder}

The following estimate uses the It\^o decomposition underlying the
small-jump argument in \cite[proof of Theorem 5.1.2]{JP}.
We formulate it as a uniform bound for functions with a small Hessian
and verify it for the completed intervals of the recursive grid.
No finite-activity assumption is needed. Constants
may depend on the fixed horizon, the dimension, and the bounds in
\eqref{eq:bounded}, but not on $n$, $g$, or the constant $a$ below.

\begin{lemma}\label{lem:small}
Suppose \eqref{eq:bounded} holds. If $g\in C^2(\R^d;\R)$ satisfies
$g(0)=0$, $Dg(0)=0$, and
\begin{equation}\label{eq:smallass}
 \norm{D^2g(x)}\le a\norm{x}\quad\text{for every }x,
\end{equation}
then, for each fixed $t$, uniformly in $n\ge1$,
\begin{equation}\label{eq:smallbound}
 \E\left|\sqrt n\left(
      V_t^n(g)-F_{\eta_n(t)}(g)\right)\right|\le C_t a.
\end{equation}
The same bound, with a possibly larger constant, holds with $F_t(g)$.
\end{lemma}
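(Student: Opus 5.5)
The plan is to apply It\^o's formula to $g$ on each completed sampling interval, started afresh at its left endpoint. For $i\le N_n(t)$ put $Y^i_s=X_s-X_{a_i}$ for $a_i\le s\le b_i$. Since $g(0)=0$,
\[
 g(X_{b_i}-X_{a_i})-\sum_{a_i<s\le b_i}g(\Delta X_s)
 =\int_{a_i}^{b_i}Dg(Y^i_{s-})\,dX^c_s+\tfrac12\int_{a_i}^{b_i}\mathrm{tr}\bigl(D^2g(Y^i_{s})c_s\bigr)ds
 +\int_{a_i}^{b_i}\!\!\int_E k^i(s,z)\,p(ds,dz)+\int_{a_i}^{b_i}Dg(Y^i_{s})\,b'_s\,ds,
\]
where $k^i(s,z)=g(Y^i_{s-}+\delta)-g(Y^i_{s-})-g(\delta)$ and $b'$ collects the drift. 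The compensated small-jump integrand $Dg(Y^i_{s-})\delta$ is absorbed by writing the jump part as $(k^i)\star p$ plus a compensated martingale term with integrand $Dg(Y^i_{s-})\delta\ind_{\{\norm\delta\le1\}}$. Summing over $i$ gives $V^n_t(g)-F_{\eta_n(t)}(g)$ exactly. Because the intervals are disjoint and each $a_i$ is a stopping time, the sum is the value at $\eta_n(t)$ of one global semimartingale decomposition with integrands built from $Y_{s-}:=X_{s-}-X_{\eta_n(s-)}$ (the increment since the last grid point). This is a predictable process, and it plays the role of $X_{s-}-X_{(i-1)\Delta_n}$ in \cite{JP}.

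The three pieces are bounded with the elementary inequalities $\norm{Dg(x)}\le a\norm{x}^2/2$, $\norm{D^2g(x)}\le a\norm x$, and
\[
 |g(x+y)-g(x)-g(y)|\le C a\,\norm x\norm y(\norm x+\norm y),
\]
which follows from a double Taylor expansion using $Dg(0)=0$. We also need the moment bound $\E[\norm{Y_s}^q]\le C_q/n$ for $q\ge2$ and $\E\norm{Y_s}^2\le C/n$, uniformly in $s\le t$. This is obtained under \eqref{eq:bounded} from \eqref{eq:steps}: $\eta_n(s)$ is a stopping time, $s-\eta_n(s)\le 1/(nc)$, and Burkholder--Davis--Gundy applies on stochastic intervals of length $\le 1/(nc)$. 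Boundedness of $X$ keeps all moments finite.
\begin{enumerate}
 \item \emph{Continuous martingale term.} By the isometry its second moment is at most $C a^2\int_0^t\E\norm{Y_s}^4ds\le Ca^2/n$, so after the factor $\sqrt n$ the $L^1$ norm is $\le Ca$.
 \item \emph{Drift, $c$ and compensated terms.} These are bounded by $Ca\int_0^t\E(\norm{Y_s}+\norm{Y_s}^2)ds$. The linear term is the dangerous one: $\E\norm{Y_s}\le C n^{-1/2}$ gives exactly $Ca/\sqrt n$, which is sufficient.
 \item \emph{Jump term.} Write $\E\int\!\int|k|\,\lambda(dz)\,ds\le Ca\int_0^t\E\int(\norm{Y_s}^2\Gamma+\norm{Y_s}\Gamma^2)\,d\lambda\,ds$. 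The part $\norm Y\Gamma^2$ is fine, giving $Ca n^{-1/2}$.
\end{enumerate}

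The main obstacle is the part $\norm{Y}^2\Gamma$, since $\int\Gamma\,d\lambda$ may be infinite. There I would split at $\Gamma\le\norm{Y_{s-}}$ versus $\Gamma>\norm{Y_{s-}}$. On the second set, $\norm Y^2\Gamma\le\norm Y\Gamma^2$. On the first, $\norm Y\norm\delta(\norm Y+\norm\delta)\le 2\norm Y^2\Gamma$ cannot be summed absolutely. Instead I would treat it as a compensated martingale: replace $p$ by $p-q$ plus $q$. The $q$-part on $\{\Gamma\le\norm Y\}$ combines with the first-order compensator term, leaving the second-order Taylor remainder $g(Y+\delta)-g(Y)-Dg(Y)\delta$ of size $a(\norm Y+\norm\delta)\norm\delta^2$, which is $\Gamma^2$-integrable. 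The $(p-q)$-part is controlled in $L^2$ by $a^2\int\norm Y^2\Gamma^2(\norm Y+\Gamma)^2d\lambda$, giving $O(a^2/n)$. This is the It\^o bookkeeping of \cite[proof of Theorem 5.1.2]{JP} with the predictable $\eta_n$ in place of the regular grid.

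Finally, replacing $F_{\eta_n(t)}(g)$ by $F_t(g)$ costs $\sqrt n\,\E\sum_{\eta_n(t)<s\le t}|g(\Delta X_s)|$. Since $|g(x)|\le Ca\norm x^3\le Ca\Gamma^2$, this is at most $Ca\sqrt n\,\E(t-\eta_n(t))\int\Gamma^2d\lambda\le Ca/\sqrt n$. That proves the last assertion.
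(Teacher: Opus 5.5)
Your proposal is correct and has the same skeleton as the paper's proof:
\begin{itemize}
\item It\^o's formula restarted at each left endpoint;
\item the moment bounds $\E\norm{Y}^q\le C/n$ for $q\ge2$ and $\E\norm{Y}\le C/\sqrt n$;
\item the finite-variation part in $L^1$ and the martingale part in $L^2$;
\item the compensator for the endpoint term.
\end{itemize}
Stopping a global decomposition at $\eta_n(t)$ is equivalent to the paper's $\F_{a_i}$-measurable indicators $\ind_{\{b_i\le t\}}$.

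The real difference is how you neutralize the term $\norm{Y}^2\Gamma$ when $\int\Gamma\,d\lambda=\infty$. The paper compensates every jump, which is legitimate under \eqref{eq:bounded} since $\int_{\{\norm{\delta}>1\}}\norm{\delta}\,d\lambda\le\int\Gamma^2d\lambda$. The drift then carries $\int_E H_g(Y_{s-},\delta)\,\lambda(dz)$, with $H_g(x,y)=K_g(x,y)-Dg(x)y$, and the martingale carries $K_g$, which is your $k$. The one extra input is the sharp pointwise bound $|H_g(x,y)|\le Ca\norm{x}\norm{y}^2$, obtained by splitting $\norm{y}\le\norm{x}$ versus $\norm{x}<\norm{y}$. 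This sharp bound is needed: the crude remainder bound $a(\norm{x}+\norm{y})\norm{y}^2$, if used on all marks, would leave a drift of size $a\int\Gamma^3d\lambda$. That is not $O(a)$ after multiplication by $\sqrt n$.

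You instead move that case split into the decomposition, through a predictable partition of the marks at $\Gamma(z)=\norm{Y_{s-}}$. On $\{\Gamma\le\norm{Y_{s-}}\}$ you compensate, and the crude bound is already at most $Ca\norm{Y}\Gamma^2$. On $\{\Gamma>\norm{Y_{s-}}\}$ you keep $k\star p$ uncompensated, with $|k|\le 2Ca\norm{Y}\Gamma^2$. Both routes give the same orders. The paper's route buys a single decomposition whose terms are all well defined.

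In yours, the opening display is not meaningful term by term in the infinite-variation case. It contains $k^i\star p$ and a drift $b'$ that includes $-\int_{\{\norm{\delta}\le1\}}\delta\,d\lambda$, and neither need be finite on its own. For the same reason, item~2's bound on the drift is not literally true as stated. To close this, say explicitly that the leftover first-order compensator on $\{\Gamma>\norm{Y_{s-}}\}$ is at most $\tfrac a2\norm{Y}^2\int_{\{\Gamma>\norm{Y}\}}\Gamma\,d\lambda\le\tfrac a2\norm{Y}\int\Gamma^2d\lambda$. This is the same inequality you already use for $k$. Also note that the marks with $\norm{\delta}>1$ in the first set add only a term of order $a\norm{Y}^2\int\Gamma^2d\lambda$.
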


\begin{proof}
Taylor's formula implies
\begin{equation}\label{eq:smallfirst}
 |g(x)|\le\frac a6\norm{x}^3,\qquad
 \norm{Dg(x)}\le\frac a2\norm{x}^2.
\end{equation}
Define
\[
 K_g(x,y)=g(x+y)-g(x)-g(y),\qquad
 H_g(x,y)=K_g(x,y)-Dg(x)y.
\]
The following two bounds will be used:
\begin{align}\label{eq:kernelbounds}
 |K_g(x,y)|&\le C a\bigl(\norm{x}^2\norm{y}
                                +\norm{x}\norm{y}^2\bigr),\notag\\
 |H_g(x,y)|&\le C a\norm{x}\norm{y}^2.
\end{align}
For the first, integrate $x^\top D^2g(ux+vy)y$ over $(u,v)\in[0,1]^2$.
For the second, if $\norm{y}\le\norm{x}$, expand $g(x+y)$ about $x$
and bound $g(y)$ using \eqref{eq:smallfirst}. If
$\norm{x}<\norm{y}$, instead expand $g(y+x)$ about $y$ and bound
$g(x)$ and $Dg(x)y$. In this latter case all terms are bounded by a
constant times $a\norm{x}\norm{y}^2$. This proves both inequalities
without assuming a third derivative of $g$.

Under \eqref{eq:bounded}, all jumps can be compensated by using the
bounded drift
\[
 \bar b_s=b_s+\int_{\{\norm{\delta(s,z)}>1\}}\delta(s,z)\lambda(dz).
\]
On a grid interval $(a_i,b_i]$ let $Y_s^i=X_s-X_{a_i}$.
It\^o's formula, subtracting the sum of $g(\Delta X)$ on that interval,
decomposes the unscaled error into a finite-variation term and a
square-integrable martingale term, with integrands
\begin{align}\label{eq:ItoA}
 dA_s^i={}&\left(Dg(Y_{s-}^i)\bar b_s
       +\tfrac12\operatorname{tr}\bigl(D^2g(Y_{s-}^i)c_s\bigr)
       +\int_E H_g(Y_{s-}^i,\delta(s,z))\lambda(dz)\right)ds,\\
 dM_s^i={}&Dg(Y_{s-}^i)\sigma_s\,dW_s
       +\int_E K_g(Y_{s-}^i,\delta(s,z))(p-q)(ds,dz),\label{eq:ItoM}
\end{align}
where $c_s=\sigma_s\sigma_s^\top$.
The stochastic Brownian integrals can equivalently use $\sigma_{s-}$;
the two versions agree in $ds\,d\Pp$ measure.

Restricting the sum to completed intervals preserves the
martingale property: $\ind_{\{b_i\le t\}}$ is $\F_{a_i}$-measurable,
so the integrands in \eqref{eq:ItoM}, multiplied by these indicators
and restricted to $(a_i,b_i]$, are predictable.

We record the moment bounds needed to sum these integrals. The
conditional maximal inequalities for the bounded drift, Brownian
integral, and compensated jump integral give, for $q=2,4$ and a
bounded $\F_{a_i}$-measurable duration $h_i$,
\begin{align*}
 \E\left[\sup_{a_i\le s\le b_i}\norm{Y_s^i}^2
                    \,\middle|\,\F_{a_i}\right]&\le C h_i,\\
 \E\left[\sup_{a_i\le s\le b_i}\norm{Y_s^i}^4
                    \,\middle|\,\F_{a_i}\right]&\le C(h_i+h_i^2+h_i^4)
                                                        \le C h_i.
\end{align*}
The last constant uses $h_i\le1/c$; $\int\Gamma^4d\lambda<\infty$
follows from boundedness of $\Gamma$. Consequently, with the sum always
restricted to $b_i\le t$,
\begin{align}\label{eq:moments}
 \E\sum_i\int_{a_i}^{b_i}
   \bigl(\norm{Y_{s-}^i}^2+\norm{Y_{s-}^i}^4\bigr)ds
   &\le C\E\sum_i h_i^2\le\frac{C_t}{n},\notag\\
 \E\sum_i\int_{a_i}^{b_i}\norm{Y_{s-}^i}\,ds
   &\le C\E\sum_i h_i^{3/2}\le\frac{C_t}{\sqrt n}.
\end{align}
Multiplication of the conditional bounds by $\ind_{\{b_i\le t\}}$
is legitimate because that event is known at $a_i$.

Equations \eqref{eq:smallfirst}, \eqref{eq:kernelbounds}, and
\eqref{eq:moments} show that the expected total variation of the
sum in \eqref{eq:ItoA}, multiplied by $\sqrt n$, is at most $C_t a$.
The second moment of the scaled sum in \eqref{eq:ItoM} is at most
\begin{align*}
 C n a^2\E\sum_i\int_{a_i}^{b_i}
 \left[\norm{Y_{s-}^i}^4
       +\norm{Y_{s-}^i}^4\int_E\Gamma^2\,d\lambda
       +\norm{Y_{s-}^i}^2\int_E\Gamma^4\,d\lambda\right]ds
 \le C_t a^2.
\end{align*}
We used the isometries and orthogonality of continuous and purely
discontinuous martingales. Cauchy--Schwarz proves \eqref{eq:smallbound}.

Finally, $\eta_n(t)\ge(t-1/(nc))\vee0$. The compensator formula and
\eqref{eq:smallfirst} give
\begin{equation}\label{eq:endpointg}
 \E\left[\sqrt n\,
   |F_t(g)-F_{\eta_n(t)}(g)|\right]
 \le\frac{C a}{\sqrt n}\int_E\Gamma^3\,d\lambda.
\end{equation}
This proves the last assertion.
\end{proof}

\section{Proof of the theorem}\label{sec:proof}

\begin{proof}[Proof of Theorem~\ref{thm:main}]
By the localization argument in Section~\ref{sec:localization}, it
suffices to work under \eqref{eq:bounded}. The proof follows the
standard truncation argument in \cite[Theorem 5.1.2]{JP}, using the
adaptive-grid lemmas established above, and proceeds in three steps.

\emph{Step 1: existence and tails of the limit.}
Integrating \eqref{eq:f} twice shows that
\begin{equation}\label{eq:orders}
 f(x)=o(\norm{x}^3),\qquad Df(x)=o(\norm{x}^2).
\end{equation}
On every finite horizon the jumps of a semimartingale have a square
summable sequence of sizes, with only finitely many sizes exceeding
any positive threshold. Thus $F_t(f)$ is absolutely convergent.
Also, conditionally on $\F$, the expected norm of $R_p^{\mathrm{corr}}$
is bounded uniformly over $T_p\le t$. Equation \eqref{eq:orders} gives
\[
 \sum_{p:T_p\le t}\widetilde\E\left[
      |Df(\Delta X_{T_p})R_p^{\mathrm{corr}}|\,\middle|\,\F\right]
 <\infty\quad\text{almost surely}.
\]
Conditional Tonelli therefore proves absolute convergence of
\eqref{eq:Z}. The same reasoning works without localization, using the
pathwise bounds on $\sigma$, $\theta^{-1}$, and the jump sizes.

The summands are conditionally centered and independent. For any
function $g$ as in Lemma~\ref{lem:small},
\begin{equation}\label{eq:limittail}
 \widetilde\E|Z_t(g)|^2
 \le C a^2\E\sum_{s\le t}\norm{\Delta X_s}^4
 \le C_t a^2\int_E\Gamma^4\,d\lambda.
\end{equation}

\emph{Step 2: functions vanishing near zero.}
Suppose first that $f(x)=0$ for $\norm{x}\le\varepsilon$.
Choose a fixed $m$ with $2/m<\varepsilon$, and use $Y^{(m)}$ from
Lemma~\ref{lem:localX}. Its jumps have norm at most $1/m$.
The elementary oscillation property of a c\`adl\`ag path implies
\[
 \limsup_{h\downarrow0}\sup_{\substack{0\le u\le v\le t+1\\v-u\le h}}
       \norm{Y_v^{(m)}-Y_u^{(m)}}\le\frac1m.
\]
Together with \eqref{eq:steps}, this shows that all its grid increments
have norm at most $2/m$ for all sufficiently large $n$, almost surely.
The finitely many $A_m$-atoms on this horizon are eventually in distinct
grid intervals.

On this event an interval without an $A_m$-atom contributes zero.
All jumps outside $A_m$ also have zero $f$-value. For an $A_m$-atom $S$,
put
\[
 Q_n(S)=\sqrt n\bigl(X_{r_n(S)}-X_{\ell_n(S)}-\Delta X_S\bigr).
\]
Lemma~\ref{lem:localX} gives joint tightness and stable convergence of
these variables to their corrected marks. Taylor's formula yields
\begin{equation}\label{eq:Taylor}
 \sqrt n\{f(\Delta X_S+n^{-1/2}Q_n(S))-f(\Delta X_S)\}
       =Df(\Delta X_S)Q_n(S)+o_{\Pp}(1).
\end{equation}
This holds jointly for any fixed number of atoms. Their random finite
number is handled by first restricting it to a deterministic bound and
then using the Poisson tail.
An atom before $t$ can be omitted by the completed-interval sum only if
it lies in $(t-1/(nc),t]$, an event of probability at most
$\lambda(A_m)/(nc)$. Hence \eqref{eq:Taylor}, Lemma~\ref{lem:marks}, and
stable multiplication by $Df(\Delta X_S)$ imply \eqref{eq:main} for
functions vanishing near zero. The limit is precisely \eqref{eq:Z},
because the other jumps have zero derivative as well.

\emph{Step 3: removal of the cutoff.}
Choose a smooth compactly supported $\chi:\R^d\to[0,1]$ equal to one
on the unit ball and zero outside the ball of radius two. Define
\[
 g_\varepsilon(x)=f(x)\chi(x/\varepsilon),\qquad
 f_\varepsilon=f-g_\varepsilon.
\]
The function $f_\varepsilon$ vanishes on the ball of radius
$\varepsilon$. If
\[
 \omega(r)=\sup_{0<\norm{x}\le r}
          \frac{\norm{D^2f(x)}}{\norm{x}},
\]
then $\omega(r)\to0$. Differentiation of the cutoff, using
$\norm{Df(x)}\le\omega(2\varepsilon)\norm{x}^2/2$ and
$|f(x)|\le\omega(2\varepsilon)\norm{x}^3/6$ on its support, gives
\begin{equation}\label{eq:cutoffbound}
 \norm{D^2g_\varepsilon(x)}\le a_\varepsilon\norm{x}
 \quad\text{for every }x,\qquad
 a_\varepsilon=C_\chi\omega(2\varepsilon)\longrightarrow0.
\end{equation}
Write $Z_n(t;h)=\sqrt n(V_t^n(h)-F_t(h))$.
Lemma~\ref{lem:small} and \eqref{eq:limittail} imply
\begin{align}\label{eq:together}
 \sup_n\E|Z_n(t;f)-Z_n(t;f_\varepsilon)|
     &\le C_t a_\varepsilon\longrightarrow0,\notag\\
 \widetilde\E|Z_t(f)-Z_t(f_\varepsilon)|^2
     &\le C_t a_\varepsilon^2\longrightarrow0.
\end{align}
For fixed $\varepsilon$, Step 2 gives
$Z_n(t;f_\varepsilon)\st Z_t(f_\varepsilon)$.
Testing against a bounded $\F$-measurable $B$ and a bounded Lipschitz
function, \eqref{eq:together} lets us first send $n\to\infty$ and then
$\varepsilon\downarrow0$. These tests, together with tightness inherited
from the same approximation, give stable convergence for all bounded
continuous tests. This proves \eqref{eq:main} under the bounded hypotheses.

For finitely many deterministic times $t_1,\ldots,t_k$, apply the same
argument to the vector of statistics. Step~2 uses joint convergence
of the marks on a common horizon, while the approximation errors in
\eqref{eq:together} are bounded by summing over the $k$ coordinates.
Testing against bounded Lipschitz functions on $\R^k$ therefore gives
the finite-dimensional stable convergence. Finally, the localization
argument of Section~\ref{sec:localization} removes the boundedness
restriction and completes the proof.
\end{proof}

\end{document}